\documentclass{article}
\usepackage{amsmath,amssymb,amsthm}
\usepackage{accents}
\usepackage{color}
\usepackage[top=25truemm,bottom=25truemm,left=25truemm,right=25truemm]{geometry}
\usepackage{graphicx}
\usepackage{hyperref}
\usepackage{mathtools}
\usepackage{url}
\theoremstyle{definition}
\newtheorem{Def}{Definition}[section]
\newtheorem{Lem}[Def]{Lemma}
\newtheorem{Prop}[Def]{Proposition}
\newtheorem{Thm}[Def]{Theorem}
\newtheorem{Cor}[Def]{Corollary}
\newtheorem{Exp}[Def]{Example}
\newtheorem{Rmk}[Def]{Remark}

\newtheorem{Ex}[Def]{Expectation}

\makeatletter
\def\bar{\accentset{{\cc@style\underline{\mskip13mu}}}}

\makeatother

\title{{\bf\textsf{Superspecial plane quintics with large automorphism groups}}}
\author{Ryo Ohashi}

\begin{document}
\maketitle
\begin{abstract}
In this paper, we study plane quintic curves whose automorphism groups have order greater than 10, as well as those with cyclic automorphism groups of order 8 and 10.
The latter two cases are represented as one-parameter families, where their superspeciality can be explicitly described in terms of a truncation of certain Gaussian hypergeometric series.
Applying this characterization, we determine the exact number of isomorphism classes of superspecial plane quintic curves with automorphism groups $\cong \mathbb{Z}/10\mathbb{Z}$.
We also provide an efficient algorithm to enumerate such curves with automorphism groups $\cong \mathbb{Z}/8\mathbb{Z}$, and provide the computational results for the range $13 < p < 10000$.
\end{abstract}

\section{Introduction}
Throughout this paper, a \emph{curve} always means a non-singular projective variety of dimension one defined over an algebraically closed field of characteristic $p > 0$.
A curve is called \emph{superspecial} if its Jacobian is isomorphic to a product of supersingular elliptic curves.
Superspecial curves remain an important topic in number theory\\ and algebraic geometry, with many open problems regarding their existence and the number of isomorphism classes.
In recent years, they have also attracted attention because of their applications in cryptography and coding theory, since they often have many rational points relative to their genus.

It is well known that every plane quartic curve has genus $3$ and is non-hyperelliptic, and conversely, every non-hyperelliptic curve of genus $3$ can be realized as a plane quartic curve.
Several studies have investigated the superspeciality of plane quartic curves.
Oort~\cite[Theorem 5.12(1)]{Oort} showed that there exists a superspecial plane quartic curve in any characteristic $p \geq 3$.
His proof focused on a special family of plane quartic curves called \hspace{-0.2mm}\emph{Ciani curves}, whose Jacobians decompose as products of three elliptic curves.
Also, Moriya-Kudo~\cite{MK} proposed an algorithm for enumerating superspecial Ciani curves.
Another characterization of a Ciani curve is as a plane quartic curve with an automorphism group containing a subgroup isomorphic to $(\mathbb{Z}/2\mathbb{Z})^2$.
The possible automorphism groups of plane quartic curves in characteristic $p \geq 5$ are classified into $13$ types (the explicit list of these groups is found in \cite[Theorem 6.5.2]{Dolgachev}).
Building on this classification, Brock~\cite{Brock} studied superspecial plane quartic curves for each of these types. % using the result by Hashimoto~\cite{Hashimoto} on the class numbers of quaternion hermitian lattices.
In particular, the superspeciality for the types with $0$-dimensional families is determined as follows:\vspace{-0.4mm}
\begin{itemize}
    \item The Klein quartic $X^3Y+Y^3Z+Z^3X = 0$ with an automorphism group of order $168$ is superspecial if and only if $p \equiv 3,5,6 \pmod{7}$.\vspace{-0.8mm}
    \item The Fermat quartic $X^4+Y^4+Z^4 = 0$ with an automorphism group of order $96$ is superspecial if and only if $p \equiv 3 \pmod{4}$.\vspace{-0.8mm}
    \item The plane quartic $X^4+Y^4+YZ^3 = 0$ with an automorphism group of order $48$ is superspecial if and only if $p \equiv 11 \pmod{12}$.\vspace{-0.8mm}
    \item The plane quartic $X^4+XY^3+YZ^3 = 0$ with cyclic automorphism group of order $9$ is superspecial if and only if $p \equiv 8 \pmod{9}$.\vspace{0.4mm}
\end{itemize}
Also, there are $3$ types which admit 1-dimensional families.
Among them, those whose automorphism groups have order $24$ or $16$ are Ciani curves; \hspace{0.5mm}their superspeciality can be completely described by the supersingularity of elliptic curves (the explicit equations of them are found in \cite[Sections 4--5]{MS}).
The remaining type consists of curves with cyclic automorphism groups of order $6$, for which it is known (cf. \cite{OKH}) that there exist exactly $\lfloor p/12 \rfloor$ superspecial curves if $p \equiv 5 \pmod{6}$, whereas no such curves exist if $p \equiv 1 \pmod{6}$.

This paper focuses on \emph{plane quintic curves}, all of which have genus $6$ and are non-hyperelliptic.
Note that, in contrast to plane quartic curves, there exist non-hyperelliptic curves of genus $6$ that are not isomorphic to any plane quintic curve (cf. \cite[Example\hspace{1mm}IV.5.6]{Hartshorne}).
Badr-Bars~\cite{BB} classified the possible automorphism groups of plane quintic curves in characteristic $p > 13$ into $14$ types, and gave an explicit defining equation for each type.
Among these $14$ types, those with $0$-dimensional families are as follows:
\begin{itemize}
\item The Fermat quintic $X^5+Y^5+Z^5 = 0$ with an automorphism group of order $150$.\vspace{-1.2mm}
\item The Hurwitz quintic $X^4Y+Y^4Z+Z^4X = 0$ with an automorphism group of order $39$.\vspace{-1.2mm}
\item The plane quintic $X^5+Y^4Z+YZ^4 = 0$ with an automorphism group of order $30$.\vspace{-1.2mm}
\item The plane quintic $X^5+Y^5+XZ^4 = 0$ with cyclic automorphism group of order $20$.\vspace{-1.2mm}
\item The plane quintic $X^5+Y^4Z+XZ^4 = 0$ with cyclic automorphism group of order $16$.\vspace{0.6mm}
\end{itemize}
In Section \ref{subsec:type1-5}, we provide explicit necessary and sufficient conditions on $p$ for these curves to be superspecial.
In addition, there are $2$ types which admit $1$-dimensional families.
One of them is the family of plane quintics defined by the equation\vspace{-1mm}
\[
    X^5+Y^5+XZ^4+rX^3Z^2 = 0 \ \text{ with }\,r \neq 0,\pm 2,\hspace{0.7mm}r^2 \neq 20\vspace{-0.2mm}
\]
which has a cyclic automorphism group of order $10$.
In Section \ref{subsec:type6}, we investigate several properties of such a curve.
In particular, we show that its superspeciality can be described by (a truncation of) a single Gaussian hypergeometric series, which leads to our first main theorem:\vspace{-0.8mm}
\begin{Thm}\label{thm:main-10}
The number of isomorphism classes of superspecial plane quintic curves with cyclic automorphism group of order $10$ equals\vspace{-0.4mm}
\[
    \left\{ 
        \begin{array}{ll}
            (3p-27)/20 & \text{ if } p \equiv 9 \!\!\pmod{20},\\[-0.2mm]
            (3p-37)/20 & \text{ if } p \equiv 19 \!\!\pmod{20},\\[-0.2mm]
            0 & \text{ otherwise}
        \end{array}
    \right.\vspace{-1.2mm}
\]
in characteristic $p > 13$.\vspace{-0.8mm}
\end{Thm}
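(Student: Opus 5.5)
The approach is to reduce superspeciality of the family $C_r\colon X^5+Y^5+XZ^4+rX^3Z^2=0$ to the vanishing of the Cartier--Manin (Hasse--Witt) matrix, and to compute that matrix explicitly from the plane equation. For a plane curve $F=0$ of degree $5$, the entries are coefficients of monomials $X^{pi_1-j_1}Y^{pi_2-j_2}Z^{pi_3-j_3}$ in $F^{p-1}$. Here $(i_1,i_2,i_3)$ and $(j_1,j_2,j_3)$ run over exponent vectors with positive entries summing to $5$, which gives a $6\times 6$ matrix. The curve is superspecial if and only if this matrix is zero.

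The computation is organized by symmetry. The automorphism $Y\mapsto \zeta_5 Y$, together with the weight grading in which $X$ and $Z^2$ have compatible degrees (the equation is weighted homogeneous once $Y^5$ is set aside), makes most entries vanish identically. We expand
\[
F^{p-1}=\sum \binom{p-1}{a,b,c,d}X^{5a}Y^{5b}(XZ^4)^c(rX^3Z^2)^d .
\]
The $Y$-exponent must equal $5b\equiv -j_2 \pmod p$, and this pins down $b$ in terms of $p\bmod 5$. The $Z$-exponent $4c+2d$ then ties $c$ and $d$ together. Each surviving entry therefore becomes a one-variable sum $\sum_d \binom{\cdot}{d}\binom{\cdot}{\cdot}r^d$, which is the truncation of a Gaussian hypergeometric series ${}_2F_1(\alpha,\beta;\gamma;\lambda)$ evaluated at $\lambda=r^2/4$ or a similar parameter; the parameters are fractions with denominator $20$ or $10$. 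The first checks are these:
\begin{itemize}
\item When $p\not\equiv 9,19\pmod{20}$, some entry is a nonzero constant (a multinomial coefficient with no $r$-dependence). This gives the ``$0$ otherwise'' case.
\item When $p\equiv 9,19\pmod{20}$, the whole matrix vanishes if and only if a single polynomial $H_p(\lambda)$ vanishes, where $H_p$ is the truncated hypergeometric series. Several nonzero entries may be equal up to units, or related via contiguous relations.
\end{itemize}

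Next comes the count. We need to show that $H_p$ has distinct roots in $\overline{\mathbb F}_p$, to find its degree, and to discard the roots corresponding to the excluded values $r=0,\pm2$, $r^2=20$. Separability should follow from the hypergeometric differential equation satisfied by $H_p$ modulo $p$: a repeated root $\lambda_0\neq 0,1$ would force $H_p\equiv 0$ by the ODE, as in the Igusa argument for the Legendre family. For the roots at $\lambda=0,1$ and at the point for $r^2=20$, we evaluate $H_p$ directly using Chu--Vandermonde/Gauss summation mod $p$. That computation decides whether the degree must be corrected by one or two, and it should account for the difference between the $-27$ and the $-37$ in the two congruence classes.

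Finally, we count isomorphism classes: $C_r\cong C_{r'}$ if and only if $r'=\pm r$, via $Z\mapsto iZ$ or a scaling. We must also check that no root gives a curve whose automorphism group is strictly larger than $\mathbb Z/10$; those are the special members among the $0$-dimensional types, handled in Section~\ref{subsec:type1-5}. Each isomorphism class thus corresponds to a root $\lambda=r^2$ (up to normalization), and the number is $\deg H_p$ minus the excluded roots, which should come out to $(3p-27)/20$ or $(3p-37)/20$.

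I expect the main obstacle to be this last bookkeeping, namely the precise degree of the truncated series together with the behavior at the boundary points $\lambda\in\{0,1,\,r^2=20\}$. Getting the constants $27$ and $37$ right requires an exact evaluation of $H_p$ at those points, and confirming that roots there are simple or absent.
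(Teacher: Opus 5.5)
Your strategy is viable, and it is essentially the paper's computation in different coordinates. The Cartier--Manin matrix of the plane model is exactly Brock's criterion (Theorem~\ref{thm:superelliptic}) for $y^5=x^5+rx^3+x$. Parametrizing by $s=r^2$ instead of the paper's $\lambda$ (where $\lambda+1/\lambda=r^2-2$) replaces the paper's final halving step. However, the proposal breaks down in two places.

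First, the ``$0$ otherwise'' case. For $p\not\equiv 4\pmod 5$ no entry is a nonzero constant. Every entry is $0$ or a unit times the coefficient of $x^{hp-k}$ in $x^e(x^4+rx^2+1)^e$, with $e=(ip-j)/5$. That coefficient is a nonzero constant only if $hp-k\in\{e,5e\}$, which would force $p$ to divide an integer in $[1,11]$. What actually happens is that a suitable entry equals a nonzero constant times a power of $r^2-4$. In the paper's coordinates this is the identity $G^{(d)}(a,b,a\mid\lambda)=(1-\lambda)^d$ for the tuples $(3,3,1,1)$, $(4,3,2,1)$, $(3,4,1,2)$. Its non-vanishing uses exactly the hypothesis $r\neq\pm2$.

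Second, the count is never derived, and your plan for the special points would not produce it.
\begin{itemize}
\item Chu--Vandermonde/Gauss summation only evaluates at argument $1$, i.e.\ $r^2=4$. It cannot show that $r^2=20$ is a root; that must come from Proposition~\ref{prop:type1-2}(1), the Fermat quintic being superspecial.
\item The difference between $-27$ and $-37$ does not come from evaluating a normalized series at $0$, where it equals $1$.
\end{itemize}
Concretely, let $P(r)$ be the coefficient of $y^{(3p-7)/10}$ in $(y^2+ry+1)^{(2p-3)/5}$. For $p\equiv4\pmod5$, the two surviving entries are, up to nonzero constants, $P(r)$ and $(r^2-4)^{(p+1)/10}P(r)$. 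This relation is the paper's Euler-transformation identity, not a contiguous relation. The rest of the count goes as follows.
\begin{itemize}
\item $P$ has exact degree $(3p-7)/10$ and the parity of that integer. So $P=r^{\epsilon}Q(r^2)$ with $\epsilon=1$ if and only if $p\equiv19\pmod{20}$. This factor $r$ is the Type~\textit{4} root of Proposition~\ref{prop:type3-5}(2).
\item $P$ is separable away from $r=\pm2$ by its Gegenbauer-type ODE, whose only finite singular points are $r=\pm2$, since $\deg P<p$.
\item $P(\pm2)=\pm\binom{(4p-6)/5}{(3p-7)/10}\neq0$.
\item Hence there are $(3p-7)/20$, respectively $(3p+3)/20$, superspecial values of $s$. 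Removing $s=20$, and also $s=0$ when $p\equiv19\pmod{20}$, gives the theorem.
\end{itemize}
Finally, $Z\mapsto iZ$ only shows $C_r\cong C_{-r}$. You also need distinct $s$ to give non-isomorphic curves. That is the ``only if'' half of Lemma~\ref{lem:isom_for_type6}, which requires the binary-quintic invariant argument.
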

\noindent In Section \ref{subsec:type8}, we study another 1-dimensional family of plane quintics by the equation\vspace{-0.5mm}
\[
    X^5+Y^4Z+XZ^4+rX^3Z^2 = 0 \ \text{ with }\,r \neq 0,\pm 2,\vspace{0.5mm}
\]
which has a cyclic automorphism group of order $8$; \hspace{0.5mm}however, unlike the case of order $10$, its superspeciality is characterized by multiple Gaussian hypergeometric series.
Consequently, the number of isomorphism classes of such superspecial curves cannot be expressed by a simple explicit formula like Theorem \ref{thm:main-10}.
We therefore propose an efficient algorithm (Theorem \ref{prethm:main-8}) to determine this number in small characteristic $p$.
Executing our algorithm in the range $13 < p < 10000$, we obtain our second main theorem:\vspace{-0.8mm}
\begin{Thm}\label{thm:main-8}
For each characteristic $p$ in the range $13 < p < 10000$, there are no superspecial plane quintic curves with a cyclic automorphism group of order $8$ if $p \equiv 1,3,5 \pmod{8}$.
Also, the numbers of isomorphism classes of such curves for $p \equiv 7 \pmod{8}$ are listed in Table \ref{tbl:main}.
\end{Thm}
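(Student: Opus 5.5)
The statement is computational, so the proof will be an algorithm together with a justification of its correctness, run for every prime $13<p<10000$. I would first set up the Cartier--Manin (Hasse--Witt) criterion for the family
\[
C_r:\ X^5+Y^4Z+XZ^4+rX^3Z^2=0,\qquad r\neq 0,\pm2 .
\]
A plane quintic $F=0$ is superspecial if and only if its Cartier--Manin matrix vanishes. That matrix is $6\times 6$, and its entries are the coefficients of the monomials $X^{pi_1-i_1'}Y^{pi_2-i_2'}Z^{pi_3-i_3'}$ in $F^{p-1}$. Here $(i_1,i_2,i_3)$ and $(i_1',i_2',i_3')$ range over the exponent triples of the six monomials $XYZ\cdot(\text{quadric})$.

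The next step is to use the action of $\mathbb{Z}/8\mathbb{Z}$ on $C_r$. The generator acts diagonally, for instance by $(X,Y,Z)\mapsto(X,\zeta_8^{a}Y,\zeta_8^{b}Z)$. This forces most entries of the matrix to vanish, leaving a small number of nonzero entries, one for each eigenspace of the action on $H^0(C_r,\Omega^1)$. Expanding $F^{p-1}$ by the multinomial theorem and fixing the exponent of $Y$ (determined by the $Y$-degree of the target monomial), each surviving entry reduces to a finite sum in $r$. That sum should be a truncated ${}_2F_1$ evaluated at an argument like $r^2/4$, possibly multiplied by a power of $r$. So superspeciality is equivalent to the simultaneous vanishing of several truncated hypergeometric polynomials $f_j(t)\in\mathbb{F}_p[t]$.

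The vanishing pattern of these entries should depend on $p\bmod 8$. For $p\equiv1,3,5\pmod8$, I expect some surviving entry to be a nonzero constant, or a binomial coefficient that is nonzero mod $p$. That would show no superspecial curve exists in those classes, and it would give a genuine proof of that part for all $p$, not only $p<10000$. I would check this congruence by congruence.

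For $p\equiv7\pmod 8$ the plan (the content of Theorem~\ref{prethm:main-8}) is as follows:
\begin{enumerate}
\item Compute $g=\gcd_j f_j$ over $\mathbb{F}_p$.
\item Remove the factors corresponding to the excluded values $r=0,\pm2$.
\item Count the roots lying in $\mathbb{F}_{p^2}$. All superspecial parameters should lie there, which follows from a Frobenius/maximal-curve argument, or simply from the fact that the truncated ${}_2F_1$ has all its roots in $\mathbb{F}_{p^2}$.
\item Divide by the size of the generic fibre of $r\mapsto[C_r]$. This should be $r\sim -r$, or whatever identification the earlier normal-form analysis gives.
\item Discard the parameters at which the automorphism group jumps to a larger group (orders $16$, $30$, and so on), since their curves do not have automorphism group exactly $\mathbb{Z}/8\mathbb{Z}$.
\end{enumerate}
Efficiency comes from working with degree $\approx p/8$ polynomials over $\mathbb{F}_p$ and using fast gcd, not expanding $F^{p-1}$ in full. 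Running this for all $p<10000$ produces Table~\ref{tbl:main}.

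The main obstacle is the derivation of the exact hypergeometric form of the surviving entries, together with the proof that the gcd computation counts isomorphism classes correctly. Multiplicities and repeated roots need care here, as does the exact orbit structure of the parameter identification. For that step I would first try to handle the entries via the Legendre-type substitution $t=r^2/4$, since that is how the order-$10$ case is treated.
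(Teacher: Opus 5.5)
The gap is in the $p\equiv 1,3,5 \pmod 8$ part. There you rely on some surviving Cartier--Manin entry being a nonzero constant, or at least nonvanishing for all admissible $r$. (That is the mechanism in the order-$10$ case for $p\not\equiv 4 \pmod 5$, where an entry collapses to $(1-\lambda)^d$.) You also claim this gives non-existence for all $p$. It cannot work, for the following reason.

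After rescaling $x,y$, the adjoint basis $x^ay^b\,dx/F_y$ ($a+b\le 2$) of the plane model is exactly the basis $x^{h-1}dx/y^j$ of $C_\lambda\colon y^4=x(x^2-1)(x^2-\lambda)$, where $\lambda+1/\lambda=r^2-2$. So your entries are, up to units, the polynomials of Proposition \ref{prop:gauss_for_type8}. For $p>13$ each of them has positive degree equal to its truncation index and constant term $1$. None has first parameter equal to the third, which is the coincidence that makes the order-$10$ entries collapse to $(1-\lambda)^d$. Hence none is a power of $1-\lambda$, and every single entry vanishes at some admissible $\lambda\neq 0,1$. For instance, $h(\lambda)$ in \eqref{eq:h} has $(p-1)/4$ resp. $(p-3)/4$ simple roots different from $0,1$. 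Non-existence is therefore the statement that these polynomials have no \emph{common} root. That is precisely the Expectation in Section \ref{sec:concluding}, which the paper leaves open. No congruence-by-congruence inspection of individual entries will prove it.

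For the theorem as actually stated (finite range), the repair is what the paper does: the $p\equiv 1,3,5$ assertion is also purely computational. You run the same gcd computation for those primes and observe that $G(\lambda)$ is constant.

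Apart from this, your pipeline is the paper's algorithm written in the $r$-coordinate. The issues you flag as the main obstacle are settled in the paper as follows:
\begin{itemize}
\item The explicit entries come from passing to $C_\lambda$ and applying Theorem \ref{thm:Clambda}, which is Brock's criterion.
\item Multiplicities are not an issue: $G$ divides $h$, which is separable with no roots at $0,1$ by \cite[Proposition 1.14]{IKO}.
\item The identification is $C_\lambda\cong C_{\lambda'}$ iff $\lambda'\in\{\lambda,1/\lambda\}$, i.e.\ $r\sim -r$, as you guessed.
\item The only exceptional point is $\lambda=-1$ ($r=0$, automorphism group $\mathbb{Z}/16\mathbb{Z}$), and it is superspecial iff $p\equiv 15 \pmod{16}$. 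There is no order-$30$ point in this family.
\end{itemize}
Restricting to roots in $\mathbb{F}_{p^2}$ is unnecessary, since every admissible root of the gcd already gives a superspecial curve. If you do restrict in the variable $r$, you need $r\in\mathbb{F}_{p^2}$ (Proposition \ref{prop:rational_for_type8}), not merely $r^2\in\mathbb{F}_{p^2}$ from the Frobenius argument.
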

\noindent Our implementation code is available at the following URL:\vspace{0.5mm}
\begin{center}
    \url{https://github.com/Ryo-Ohashi/SSpZ8quintic}.\vspace{1.3mm}
\end{center}
We remark that it took only approximately 36.2 seconds to obtain the result of Theorem \ref{thm:main-8} (see Section \ref{subsec:type8} for details on the computing environment), which suggests that the upper bound for $p$ can be easily updated.
The proof of (non)-existence and the number of isomorphism classes of superspecial plane quintic curves with a cyclic automorphism group of order $8$ for general characteristics is left for future work.\vspace{-2.8mm}

\paragraph{Organization.} The remainder of this paper is organized as follows.
In Section \ref{sec:preliminaries}, we review the relationship between the superspeciality of specific superelliptic curves and Gaussian hypergeometric series.
In Section \ref{sec:main}, we discuss the superspeciality of plane quintic curves with automorphism groups of order greater than $10$, or cyclic automorphism groups of order $8$ and $10$.
Finally, we give concluding remarks in Section \ref{sec:concluding}.\vspace{-2.8mm}

\paragraph{Acknowledgements.} This research was supported by JSPS Grant-in-Aid for Young Scientists 25K17225.

\newpage
\section{Preliminaries}\label{sec:preliminaries}
In this section, we recall several properties of superelliptic curves that will be used in the subsequent section.
Let $K$ be an algebraically closed field of characteristic $p > 2$.
A \emph{superelliptic curve} over $K$ is a curve defined by a projective equation\vspace{-0.7mm}
\begin{equation}\label{eq:superelliptic}
    Y^nZ^{m-n} = F(X,Z)\ \text{ with }\,n \hspace{-0.1mm}\geq 2 \,\text{ and }\, p \nmid n,\vspace{0.4mm}
\end{equation}
where $F(X,Z) \in K[X,Z]$ is a binary form of degree $m \geq 3$ with no multiple factors.
% A superelliptic curve in the case where $n=2$ is no other than a \emph{hyperelliptic curve}.
The Riemann-Hurwitz formula tells us that the genus of the superelliptic curve \eqref{eq:superelliptic} is given by\vspace{-1.2mm}
\[
    g = \frac{(m-1)(n-1)-{\rm gcd}(m,n)+1}{2}.
\]
Also, the following is a fundamental proposition:\vspace{-1.2mm}
\begin{Prop}[{cf. \cite[Section 8]{MS}}]\label{prop:isomorphic}
The superelliptic curves $Y^nZ^{m-n} = F(X,Z)$ and $Y^nZ^{m-n} = G(X,Z)$ over $K$ are isomorphic to each other if and only if $F(X,Z)$ and $G(X,Z)$ are equivalent under ${\rm GL}_2(K)$.
\end{Prop}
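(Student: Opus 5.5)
The "if" direction is immediate. Suppose $G = A\cdot F$ for some $A = \begin{pmatrix} a & b\\ c & d\end{pmatrix} \in {\rm GL}_2(K)$, possibly up to a nonzero scalar $\lambda$:
\[
G(X,Z) = \lambda F(aX+bZ,\,cX+dZ).
\]
I would work with the smooth models in which $Z$ is not singled out. The point is that the projective equation $Y^nZ^{m-n}=F(X,Z)$ is singular at infinity in general. Its smooth model is the normalization, whose function field is $K(x,y)$ with $y^n=f(x)$ and $f(x)=F(x,1)$.

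\textbf{Step 1 (if direction).} Working in the affine chart $Z=1$, I set $x' = (ax+b)/(cx+d)$ and $y' = \mu\, y/(cx+d)^{k}$ for a suitable $k$ and $\mu$. When $n\mid m$, take $k=m/n$. In general, I would treat the curve as the cyclic cover of $\mathbb{P}^1$ branched over the roots of $F$, together with $\infty$ when $n\nmid m$. Then I would use that $K(x)(\sqrt[n]{f})$ depends only on the class of $f$ in $K(x)^\times/K(x)^{\times n}$. A Möbius change of variable sends $f(x)$ to $g(x')$ times $(cx+d)^{-m}$. The discrepancy $(cx+d)^{m}$ is an $n$-th power times $(cx+d)^{m \bmod n}$, and this remaining factor is absorbed by matching the branch behaviour at $\infty$. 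Concretely, $\mathrm{ord}_\infty$ of $f$ is $-m$, and the construction is coherent because the set of branch points moves by $A$. The scalar $\lambda$ is absorbed by rescaling $y$ by $\lambda^{1/n}$, using that $K$ is algebraically closed.

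\textbf{Step 2 (only if direction).} Let $\phi: C_F\to C_G$ be an isomorphism. The key input is that the cyclic group $\langle \sigma\rangle\cong \mathbb{Z}/n\mathbb{Z}$, with $\sigma: y\mapsto \zeta_n y$, is conjugated by $\phi$ into the corresponding group for $C_G$. Then $\phi$ descends to an automorphism of $\mathbb{P}^1=C/\langle\sigma\rangle$, that is, to an element of ${\rm PGL}_2(K)$. This element maps branch loci to branch loci, and from it one reads off that $F$ and $G$ agree up to $A$ and a scalar.

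To get the conjugacy, I would argue that $\langle\sigma\rangle$ is canonical. In the genus range relevant here (the application will be genus $6$ with $n=4$ or $5$), I would use the structure result for superelliptic curves that I expect \cite[Section 8]{MS} to provide: for $g\ge2$ and $n$ prime to $p$, the superelliptic group is normal in ${\rm Aut}$ apart from small exceptional cases. One approach is Castelnuovo--Severi: two distinct cyclic subgroups of order $n$ with quotient $\mathbb{P}^1$ would force $g\le (n-1)^2$. That inequality is violated when the number of branch points is large, and the exceptional cases would be checked by hand.

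\textbf{Main obstacle.} The main difficulty is this uniqueness of the superelliptic subgroup, especially for small $m$ relative to $n$. I would simply cite \cite{MS} for it, since the proposition is stated as a citation. A secondary subtlety is bookkeeping the branch point at $\infty$ when $n\nmid m$: one must ensure that $A$ maps the roots of $F$ to those of $G$ while the ramification index at $\infty$ is preserved. This reduces to the statement that $\infty$ is the unique branch point with ramification $n/\gcd(m,n)$ when $\gcd(m,n)<n$, so any isomorphism fixes it.
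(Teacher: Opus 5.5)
Your Step~1 is fine when $n\mid m$, but for $n\nmid m$ it breaks down, and the problem is not just bookkeeping. The leftover factor $(cx+d)^{m \bmod n}$ is not an $n$-th power in $K(x)$, so it changes the Kummer class. Transporting $Y^nZ^{m-n}=F$ by $A$ gives the $\mathbb{Z}/n\mathbb{Z}$-cover branched at the roots of $G$ and, with exponent $-m \bmod n$, at $[d:-c]$. By contrast, $Y^nZ^{m-n}=G$ has that extra branch point at $[1:0]$, and for $c\neq 0$ nothing absorbs the difference.

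The implication is in fact false. Binary quintics modulo ${\rm GL}_2(K)$ have $6-4=2$ moduli. The curves $y^2=f(x)$, resp.\ $y^4=f(x)$, with $\deg f=5$ form $3$-dimensional families (all genus-$2$ curves, resp.\ $f$ up to affine changes of $x$). So their isomorphism class cannot depend only on the ${\rm GL}_2(K)$-class of $F$. Your own last paragraph, which claims that every isomorphism fixes $\infty$, already contradicts Step~1 for $c\neq0$. For $n\nmid m$, at best the version with ${\rm GL}_2(K)$ replaced by the stabilizer of $[1:0]$ can hold.

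The paper gives no argument beyond the citation of \cite{MS}, and its applications are unaffected. Lemma~\ref{lem:isom_for_type6} has $n=m=5$. Lemma~\ref{lem:isom_for_type8} only needs ``isomorphic $\Rightarrow$ ${\rm GL}_2(K)$-equivalent'', and its converse is realized by the explicit map $X\mapsto\sqrt{-1}X$.

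Step~2 has two further gaps.

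\textbf{Identifying $\infty$.} Ramification indices do not single out $\infty$ when $\gcd(m,n)=1$ (e.g.\ $n=4$, $m=5$), since then every branch point is totally ramified. What distinguishes $\infty$ is ${\rm div}(f)$ modulo $n$, well defined up to a common unit of $\mathbb{Z}/n\mathbb{Z}$. Its coefficient is $1$ at each root and $-m \bmod n$ at $\infty$. This forces the induced M\"obius map to fix $\infty$ exactly when $m\not\equiv-1\pmod n$. If $m\equiv-1\pmod n$, even the ``only if'' direction fails. The substitution $x\mapsto1/x$, $y\mapsto y/x^{(m+1)/n}$ identifies $y^n=x\prod_i(x-a_i)$ with $y^n=x\prod_i(1-a_ix)$, and these forms are in general not ${\rm GL}_2(K)$-equivalent. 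For $n=2$, $m=5$ they correspond to placing different Weierstrass points at infinity.

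\textbf{Descent to $\mathbb{P}^1$.} Descending $\phi$ requires $\phi\langle\sigma_F\rangle\phi^{-1}$ to be conjugate to $\langle\sigma_G\rangle$ in ${\rm Aut}(C_G)$. Normality of $\langle\sigma_G\rangle$ is not the relevant property, and uniqueness is false in your range: the Fermat quintic has three distinct subgroups $\cong\mathbb{Z}/5\mathbb{Z}$ with quotient $\mathbb{P}^1$, permuted by $S_3$. Castelnuovo--Severi yields nothing here, because $g=6\le(n-1)^2\in\{9,16\}$ for $n\in\{4,5\}$. So the cases you would ``check by hand'' are the entire proof. One workable input for plane quintics is that isomorphisms of smooth plane curves of degree $\ge4$ are induced by ${\rm PGL}_3(K)$. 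Alternatively, when ${\rm Aut}(C_G)$ is cyclic, it has a unique subgroup of order $n$, which settles the conjugacy directly.
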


In what follows, by setting $x = X/Z$ and $y = Y/Z$, we identify the superelliptic curve \eqref{eq:superelliptic} with its affine model $y^n = f(x)$, where $f(x) \coloneqq F(x,1) \in K[x]$ is a square-free polynomial of degree $m \geq 3$.
Let us describe a necessary and sufficient condition for superelliptic curves to be superspecial:\vspace{-1.2mm}
\begin{Thm}[{cf. \cite[Theorem 2.15]{Brock}}]\label{thm:superelliptic}
The superelliptic curve given by $y^n = f(x)$ is superspecial if and only if the $x^{hp-k}$-coefficients in $f(x)^{(ip-j)/n}$ are equal to $0$ for all integers $(i,j,h,k)$ satisfying\vspace{-1.3mm}
\[
    1 \leq i,j < n, \quad 1 \leq h < mi/n, \quad 1 \leq k < mj/n,\ \text{ and }\ n \mid\hspace{-0.2mm} (ip-j),\vspace{-0.1mm}
\]
where $m \geq 3$ is the degree of $f(x)$.
\end{Thm}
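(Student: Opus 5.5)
The plan is to prove the criterion with the Cartier--Manin matrix. I will use the standard fact that a curve $C$ is superspecial if and only if the Cartier operator $\mathcal{C}$ vanishes on $H^0(C,\Omega^1_C)$. Equivalently, the Cartier--Manin (Hasse--Witt) matrix is zero. So the proof reduces to two tasks. First, write down an explicit basis of regular differentials on $y^n=f(x)$. Second, compute $\mathcal{C}$ on that basis and read off the coefficients.

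\textbf{Step 1: a basis of regular differentials.} For $1\le j<n$ and $1\le k<mj/n$, I will take
\[
\omega_{j,k}=\frac{x^{k-1}\,dx}{y^{j}} .
\]
To see these are regular, I will check the order of $\omega_{j,k}$ over the roots of $f$ and over $x=\infty$. This is a local computation with the ramification indices $n/\gcd(n,\cdot)$.

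To see they form a basis, I will count them. The number of pairs $(j,k)$ is
\[
\sum_{j=1}^{n-1}\bigl(\lceil mj/n\rceil-1\bigr),
\]
and this sum equals the genus $g$ given by the Riemann--Hurwitz formula above. Since the $\omega_{j,k}$ are linearly independent (they have distinct $x$-degrees and $y$-powers), they span $H^0(C,\Omega^1_C)$.

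\textbf{Step 2: computing the Cartier operator.} Given $j$, choose $i$ with $1\le i<n$ and $n\mid(ip-j)$. Such an $i$ exists and is unique because $p\nmid n$. Then rewrite
\[
\omega_{j,k}=\frac{x^{k-1}\,y^{ip-j}}{y^{ip}}\,dx=\frac{x^{k-1}f(x)^{(ip-j)/n}}{(y^{i})^{p}}\,dx .
\]
Since $\mathcal{C}$ is $p^{-1}$-linear, it pulls out the factor $(y^{i})^{-p}$ as $y^{-i}$. Expand $f^{(ip-j)/n}=\sum_\ell c_\ell x^\ell$ and use $\mathcal{C}(x^{s-1}dx)=x^{s/p-1}dx$ when $p\mid s$, and $0$ otherwise. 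This gives
\[
\mathcal{C}(\omega_{j,k})=\sum_{h} c_{hp-k}^{1/p}\,\frac{x^{h-1}dx}{y^{i}} .
\]

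\textbf{Step 3: the range of $h$.} Because $\mathcal{C}$ maps regular differentials to regular differentials, only terms of the form $\omega_{i,h}$ with $1\le h<mi/n$ can survive. If needed, I will double-check this directly with a degree bound on $f^{(ip-j)/n}$. The $\omega_{i,h}$ are independent, so $\mathcal{C}=0$ on $H^0(C,\Omega^1_C)$ if and only if all the listed coefficients $c_{hp-k}$ vanish. This is exactly the condition in Theorem \ref{thm:superelliptic}.

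\textbf{Main obstacle.} I expect the hardest part to be Step 1: checking regularity at infinity and at the branch points when $\gcd(m,n)>1$, and confirming the count matches $g$. For the count, I will use the lattice-point identity
\[
\sum_{j}\lfloor mj/n\rfloor=\frac{(m-1)(n-1)+\gcd(m,n)-1}{2},
\]
together with the correction for those $j$ where $n\mid mj$.
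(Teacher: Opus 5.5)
Your proposal is correct. It is the standard Cartier-operator computation on the basis $x^{k-1}\,dx/y^{j}$ ($1\le j<n$, $1\le k<mj/n$) of regular differentials, and the regularity check at infinity, the genus count via $\sum_{j}\lfloor mj/n\rfloor$ minus the $\gcd(m,n)-1$ correction, and the degree bound forcing $h<mi/n$ all go through as you describe. The paper gives no proof of its own and simply cites Brock's thesis, where the result comes from this same Cartier--Manin matrix computation, so your route is essentially the same.
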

% \begin{Exp}
% Consider a hyperelliptic curve $H: y^2 = f(x)$ of genus $g$, where $f(x) \in K[x]$ is a square-free polynomial of degree $2g+1$ or $2g+2$.
% In this case, the pair $(i,j)$ satisfying the conditions in Theorem \ref{thm:superelliptic} is only $(i,j) = (1,1)$.
% Therefore, the hyperelliptic curve $H$ is superspecial if and only if all the $x^{hp-k}$-coefficients in $f(x)^{(p-1)/2}$ are equal to $0$ for $1 \leq h,k \leq g$.
% This is precisely the result of \cite[Section 2]{Yui}.
% \end{Exp}

As a corollary of Theorem \ref{thm:superelliptic}, the superelliptic curve $y^n = x^m-x$ is superspecial if and only if\vspace{-0.6mm}
\begin{equation}\label{eq:S}
    S \coloneqq \left\{
    \,(i,j,h,k) \in \mathbb{Z}^4\ \middle|
    \begin{array}{l}
        1 \leq i,j < n,\ n \mid (ip-j),\\[-0.2mm]
        1 \leq h < mi/n,\ 1 \leq k < mj/n,\ (m-1) \mid \bigl(hp-k - \frac{ip-j}{n}\bigr),\\
        0 \leq hp-k - \frac{ip-j}{n} \leq (m-1)\frac{ip-j}{n}
    \end{array}\hspace{-1mm}\right\}\vspace{-0.1mm}
\end{equation}
is empty.
Indeed, it follows from the binomial theorem that\vspace{-1mm}
\begin{align*}
    (x^m-x)^{(ip-j)/n} &= x^{(ip-j)/n}(x^{m-1}-1)^{(ip-j)/n}\\[-0.9mm]
    &= \sum_{\ell=0}^{(ip-j)/n}\binom{\frac{ip-j}{n}}{\ell}(-1)^{(ip-j)/n-\ell}x^{(ip-j)/n+(m-1)\ell}.\\[-6.1mm]
\end{align*}
Since $0 \leq (ip-j)/n < p$, one has\vspace{-2.2mm}
\[
    \binom{\frac{ip-j}{n}}{\ell}(-1)^{(ip-j)/n-\ell} \not\equiv 0 \pmod{p}\vspace{-0.3mm}
\]
for every $0 \leq \ell \leq (ip-j)/n$.
Hence, the $x^{hp-k}$-coefficient in $(x^m-x)^{(ip-j)/n}$ is non-zero if and only if there exists an integer $\ell$ such that\vspace{-1.2mm}
\[
    hp-k = (ip-j)/n + (m-1)\ell \ \text{ with }\ 0 \leq \ell \leq (ip-j)/n.\vspace{0.4mm}
\]
This condition is equivalent to\vspace{-1.3mm}
\[
    (m-1) \mid \Bigl(hp-k - \frac{ip-j}{n}\Bigr) \ \text{ and }\ 0 \leq hp-k - \frac{ip-j}{n} \leq (m-1)\frac{ip-j}{n},
\]
as desired.
Here, the following sufficient conditions for $S$ to be empty were given in \cite[Corollary 2.16]{Brock}.\vspace{-1.2mm}
\begin{Cor}[{cf. \cite[Corollary 2.16]{Brock}}]\label{cor:x^d-x}
The set $S$ in \eqref{eq:S} is empty if one of the following conditions holds:\vspace{-1.1mm}
\begin{enumerate}
\item[(i)] $p \equiv -1 \pmod{(m-1)n}$, or\vspace{-2.7mm}
\item[(ii)] $p \equiv m \pmod{(m-1)n}$ when $n$ is a divisor of $m+1$.\vspace{-0.6mm}
\end{enumerate}
Consequently, the superelliptic curve $y^n = x^m - x$ is superspecial in each of these cases.
\end{Cor}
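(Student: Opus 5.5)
We argue directly from the definition \eqref{eq:S}: in each case we show that no quadruple $(i,j,h,k)$ satisfies the congruence condition $(m-1)\mid\bigl(hp-k-\frac{ip-j}{n}\bigr)$ together with the bounds on $h,k$. The range condition $0\le hp-k-\frac{ip-j}{n}\le(m-1)\frac{ip-j}{n}$ will not be needed. Throughout, put $N=(m-1)n$ and write $p=Nt+r$, where $r=-1$ in case (i) and $r=m$ in case (ii). The plan has two steps: first use $n\mid(ip-j)$ to pin down $j$ in terms of $i$, then reduce the difference modulo $m-1$ and show the resulting residue lies strictly between two consecutive multiples of $m-1$.

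\emph{Step 1 ($j=n-i$).} In case (i), $p\equiv -1\pmod n$. In case (ii), $p\equiv m\equiv -1\pmod n$, because $n\mid m+1$. In both cases $n\mid(ip-j)$ means $i+j\equiv 0\pmod n$. Since $1\le i,j<n$, this forces $j=n-i$, and hence $k<m(n-i)/n$.

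\emph{Step 2, case (i).} We have $ip-j=iNt-i-(n-i)=iNt-n$, so $\frac{ip-j}{n}=i(m-1)t-1$. Also $hp-k=hNt-h-k$. Since $(m-1)\mid N$,
\[
hp-k-\tfrac{ip-j}{n}\equiv 1-(h+k)\pmod{m-1}.
\]
Now $h<mi/n$ and $k<m(n-i)/n$, and these two bounds sum to the integer $m$. Hence $h+k\le\lceil mi/n\rceil+\lceil m(n-i)/n\rceil-2\le m-1$. Together with $h+k\ge 2$, this gives $2\le h+k\le m-1$. The only integers congruent to $1$ modulo $m-1$ near this range are $1$ and $m$, both excluded. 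So the divisibility fails and $S=\emptyset$.

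\emph{Step 2, case (ii).} Let $c=(m+1)/n\in\mathbb{Z}$. Then $ip-j=iNt+im-(n-i)=iNt+i(m+1)-n$, so $\frac{ip-j}{n}=i(m-1)t+ic-1$. Also $hp-k=hNt+hm-k$. Using $m\equiv 1\pmod{m-1}$,
\[
hp-k-\tfrac{ip-j}{n}\equiv h-k-ic+1\pmod{m-1}.
\]
For the bounds, note $mi/n=ic-i/n$ with $0<i/n<1$, so $h\le ic-1$. Similarly $k\le(n-i)c-1$. Therefore
\[
3-nc\;\le\; h-k-ic+1\;\le\;-1,
\]
and $3-nc=2-m$. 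The interval $[2-m,-1]$ lies strictly between $-(m-1)$ and $0$, so it contains no multiple of $m-1$. Thus $S=\emptyset$ again.

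The superspeciality of $y^n=x^m-x$ then follows from the equivalence established just before the corollary: Theorem \ref{thm:superelliptic} applied to $f(x)=x^m-x$ reduces superspeciality to $S=\emptyset$. The step needing most care is the ceiling estimate: in case (i) it is what forces $h+k\le m-1$, and in case (ii) the sharp bounds $h\le ic-1$ and $k\le(n-i)c-1$. Both rely on $i/n$ not being an integer, which holds because $1\le i<n$.
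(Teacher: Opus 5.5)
Your proof is correct. The paper gives no argument of its own here: it derives the description \eqref{eq:S} of $S$ and then simply quotes \cite[Corollary 2.16]{Brock}. So your proposal is a self-contained verification rather than a variant of a proof in the text.

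What your argument adds is a sharper picture. Both hypotheses give $p\equiv -1 \pmod n$, which forces $j=n-i$. After that, the congruence $(m-1)\mid\bigl(hp-k-\frac{ip-j}{n}\bigr)$ alone fails for every admissible $(h,k)$, because the relevant residue lies in $[2-m,-1]$. The range condition, i.e.\ the non-vanishing of the binomial coefficients modulo $p$, plays no role.

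Two minor remarks.

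\begin{enumerate}
\item[(a)] In case (i) you do not need ceilings. Adding $h<mi/n$ and $k<m(n-i)/n$ gives $h+k<m$, hence $h+k\le m-1$ directly. Contrary to your closing sentence, this step does not use $n\nmid i$. Only case (ii) needs it, to get $\lceil mi/n\rceil=ic$ and $\lceil m(n-i)/n\rceil=(n-i)c$.
\item[(b)] For the final ``consequently'', record that the hypotheses imply $\gcd(p,n)=1$, since $p\equiv -1\pmod n$. They also imply $\gcd(p,m-1)=1$, since $p\equiv -1$ in case (i) and $p\equiv m\equiv 1\pmod{m-1}$ in case (ii). Hence $p\nmid n$ and $x^m-x$ is square-free, so $y^n=x^m-x$ is a genuine superelliptic curve and Theorem~\ref{thm:superelliptic} applies.
\end{enumerate}
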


Next, we fix a pair $(n,r)$ of positive integers where $n \geq 2$ and $p \nmid n$. 
Consider the following 1-dimensional family of superelliptic curves defined by the equation\vspace{-1mm}
\begin{equation}\label{eq:Clambda}
    C_\lambda: y^n = x(x^r-1)(x^r-\lambda) \ \text{ with }\, \lambda \neq 0,1,\vspace{-0.5mm}
\end{equation}
and the following set\vspace{-0.8mm}
\begin{equation}\label{eq:T}
    T \coloneqq \left\{
    \,(i,j,h,k) \in \mathbb{Z}^4\ \middle|
    \begin{array}{l}
        1 \leq i,j < n,\ n \mid (ip-j),\\[-0.2mm]
        1 \leq h < (2r+1)i/n,\ 1 \leq k < (2r+1)j/n,\ r \mid \bigl(hp-k - \frac{ip-j}{n}\bigr),\\
        0 \leq hp-k-\frac{ip-j}{n} \leq 2r\frac{ip-j}{n}
    \end{array}\hspace{-1mm}\right\}.\vspace{-0.4mm}
\end{equation}
Then, the superspeciality of $C_\lambda$ can be described in terms of Gaussian hypergeometric series as follows:\vspace{-1.3mm}
\begin{Thm}\label{thm:Clambda}
The superelliptic curve $C_\lambda$ in \eqref{eq:Clambda} is superspecial if and only if the following conditions hold for every $4$-tuple $(i,j,h,k) \in T$, where $T$ is the set defined in \eqref{eq:T}:\vspace{-1.1mm}
\begin{itemize}
    \item If $hp-k-\frac{ip-j}{n} \leq r\frac{ip-j}{n}$, then\vspace{-1.2mm}
    \[
        G^{((hp-k-\frac{ip-j}{n})/r)}\scalebox{0.95}{$\displaystyle\left(\frac{j}{n},\hspace{0.3mm}\frac{k}{r}-\frac{j}{nr},\hspace{0.5mm}1-\frac{j}{n}+\frac{k}{r}-\frac{j}{nr} \ \middle|\  \lambda\right)$} = 0.\vspace{-1.5mm}
    \]
    \item If $hp-k-\frac{ip-j}{n} > r\frac{ip-j}{n}$, then\vspace{-0.6mm}
    \[
        G^{(2\frac{ip-j}{n}-(hp-k-\frac{ip-j}{n})/r)}\scalebox{0.95}{$\displaystyle\left(\frac{j}{n},-\frac{k}{r}+\frac{2j}{n}+\frac{j}{nr},\hspace{0.5mm}1+\frac{j}{n}-\frac{k}{r}+\frac{j}{nr} \ \middle|\  \lambda\right)$} = 0.\vspace{-2.2mm}
    \]
\end{itemize}
Here, $G^{(d)}(a,b,c \mid z)$ denotes the truncated \hspace{-0.1mm}Gaussian hypergeometric series defined in \cite[Definition 2.1.2]{OK}, namely, we write\vspace{-1.1mm}
\begin{equation}\label{eq:Gaussian}
    G^{(d)}(a,b,c \mid z) \coloneqq \sum_{\ell=0}^d \frac{(a\,;\ell)(b\,;\ell)}{(c\,;\ell)(1\,;\ell)}z^\ell\vspace{1.7mm}
\end{equation}
where $(x\,;0) = 1$ and $(x\,;\ell) = x(x+1) \cdots (x+\ell-1)$ for an integer $\ell \geq 1$.\vspace{-1.1mm}
\end{Thm}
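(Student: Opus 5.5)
We apply Theorem \ref{thm:superelliptic} to $f(x) = x(x^r-1)(x^r-\lambda)$, which has degree $m = 2r+1$. The curve $C_\lambda$ is superspecial if and only if, for every admissible $(i,j,h,k)$, the $x^{hp-k}$-coefficient of $f(x)^{N}$ vanishes, where $N := (ip-j)/n$. The plan is to compute this coefficient explicitly and to identify it, up to a nonzero constant, with the truncated hypergeometric polynomials in the statement.

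\textbf{Expanding the power.} Write
\[
f(x)^N = x^N (x^r-1)^N (x^r-\lambda)^N .
\]
Every exponent appearing here has the form $N + r s$ with $0 \le s \le 2N$. So the $x^{hp-k}$-coefficient is automatically zero unless $r \mid (hp-k-N)$ and $0 \le hp-k-N \le 2rN$. These are exactly the extra conditions defining $T$ in \eqref{eq:T}, so only tuples in $T$ impose conditions.

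For such a tuple set $s = (hp-k-N)/r$. By the binomial theorem the coefficient is
\[
c_s = \sum_{\ell} \binom{N}{\ell}\binom{N}{s-\ell}(-1)^{N-\ell}(-\lambda)^{N-s+\ell},
\]
where $\ell$ ranges over $\max(0,s-N) \le \ell \le \min(s,N)$. Here $\ell$ is the exponent taken from $(x^r-\lambda)^N$ and $s-\ell$ the one from $(x^r-1)^N$, or the reverse, depending on convenience.

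\textbf{Case $s \le N$.} Here $\ell$ runs over $0,\dots,s$, and after factoring out a power of $\lambda$ we get a polynomial of degree $s$ in $\lambda$. Using the identities
\[
\binom{N}{\ell} = (-1)^\ell \frac{(-N;\ell)}{(1;\ell)}, \qquad \binom{N}{s-\ell} = \binom{N}{s}\frac{(-s;\ell)}{(N-s+1;\ell)},
\]
the sum becomes $\binom{N}{s}$ times ${}_2F_1(-N,-s;N-s+1;\lambda)$ truncated at degree $s$. Since $0 \le N < p$, the prefactor $\binom{N}{s}$ is nonzero mod $p$, so only the polynomial matters.

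\textbf{Case $s > N$.} Substituting $\ell \mapsto \ell + (s-N)$, or equivalently using the symmetry $s \mapsto 2N-s$ obtained from $x^{-2rN}$ reversal, reduces this to the first case with degree $2N-s$. This matches the truncation index $2N-(hp-k-N)/r$ in the statement.

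\textbf{Matching the parameters mod $p$.} It remains to rewrite $-N$, $-s$ and $N-s+1$ modulo $p$ in terms of $j/n$ and $k/r$. In $\mathbb{F}_p$ we have $N \equiv -j/n$. Also $s = (hp-k-N)/r \equiv (-k + j/n)/r = -k/r + j/(nr)$. This gives
\[
-N \equiv \frac{j}{n}, \qquad -s \equiv \frac{k}{r}-\frac{j}{nr}, \qquad N-s+1 \equiv 1-\frac{j}{n}+\frac{k}{r}-\frac{j}{nr},
\]
which are exactly the parameters in the first bullet. The second case gives the parameters in the second bullet analogously.

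\textbf{Main obstacle.} The delicate point is justifying this passage mod $p$. The Pochhammer ratios must be computed with integer entries and only then reduced. We need the denominators $(N-s+1;\ell)(1;\ell)$ for $\ell \le s$ to be nonzero mod $p$. This holds because all factors lie in $[1,N] \subset [1,p-1]$, so reducing the rational parameters mod $p$ is legitimate.

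The second case also needs care. One must choose whether to reindex or to use reversal symmetry so that the resulting parameters come out exactly as stated, namely $b = -k/r+2j/n+j/(nr)$ and $c = 1+j/n-k/r+j/(nr)$. I would verify this by tracking $-(2N-s)$ and $s-N+1$ modulo $p$.
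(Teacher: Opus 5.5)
Your proposal is correct and follows essentially the same route as the paper: expand $f(x)^N$ by the binomial theorem, note that only tuples in $T$ can give nonzero coefficients, split into the cases $s\le N$ and $s>N$, and reduce the parameters modulo $p$ using $N\equiv -j/n$. (These cases are the paper's $b\le a$ and $b>a$; the paper cites \cite[Lemma A.1]{OK} for the two closed forms, which you derive yourself via Pochhammer identities and the reindexing $\ell\mapsto\ell+(s-N)$.) One small slip: the correct identity is $\binom{N}{s-\ell}=(-1)^\ell\binom{N}{s}\frac{(-s\,;\ell)}{(N-s+1\,;\ell)}$, and it is exactly the cancellation of this $(-1)^\ell$ with the one in $\binom{N}{\ell}=(-1)^\ell\frac{(-N\,;\ell)}{(1\,;\ell)}$ that makes the argument $\lambda$ rather than $-\lambda$ (and dropping the factor $\lambda^{N-s}$ uses $\lambda\neq 0$).
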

\begin{proof}
It follows from Theorem \ref{thm:superelliptic} that $C_\lambda$ is superspecial if and only if, for all integers $(i,j,h,k)$ satisfying the conditions\vspace{-1mm}
\begin{equation}\label{eq:condition}
    1 \leq i,j < n, \quad 1 \leq h < (2r+1)i/n, \quad 1 \leq k < (2r+1)j/n,\ \text{ and }\ n \mid\hspace{-0.2mm} (ip-j),\vspace{0.8mm}
\end{equation}
the $x^{hp-k}$-coefficients in $\{x(x^r-1)(x^r-\lambda)\}^{(ip-j)/n}$ are equal to $0$.
Since\vspace{-0.6mm}
\begin{align*}
    (x^r-1)^{(ip-j)/n} &= \sum_{\ell_1=0}^{(ip-j)/n}\binom{(ip-j)/n}{\ell_1}(-1)^{(ip-j)/n-\ell_1}x^{r\ell_1}, \,\text{ and}\\[-0.3mm]
    (x^r-\lambda)^{(ip-j)/n} &= \sum_{\ell_2=0}^{(ip-j)/n}\binom{(ip-j)/n}{\ell_2}(-\lambda)^{(ip-j)/n-\ell_2}x^{r\ell_2}\\[-5.3mm]
\end{align*}
by the binomial theorem, one can expand $\{x(x^r-1)(x^r-\lambda)\}^{(ip-j)/n}$ as\vspace{-1.5mm}
\begin{equation}\label{eq:expand}
    \sum_{\ell=0}^{2(ip-j)/n}\hspace{-0.4mm}\Biggl(\hspace{0.5mm}\sum_{\ell_1+\ell_2=\ell}\binom{(ip-j)/n}{\ell_1}\!\binom{(ip-j)/n}{\ell_2}(-1)^{(ip-j)/n-\ell_1}(-\lambda)^{(ip-j)/n-\ell_2}\hspace{-0.5mm}\Biggr)x^{(ip-j)/n+r\ell}.\vspace{-0.3mm}
\end{equation}
Hence, for integers $(i,j,h,k)$ satisfying \eqref{eq:condition}, the $x^{hp-k}$-coefficient in $\{x(x^r-1)(x^r-\lambda)\}^{(ip-j)/n}$ is non-zero \emph{only if} there exists an integer $\ell$ such that\vspace{-0.7mm}
\[
    hp-k = (ip-j)/n + r\ell \ \text{ with }\ 0 \leq \ell \leq 2(ip-j)/n,\vspace{-0.3mm}
\]
which is equivalent to\vspace{-1.7mm}
\[
    r \mid \Bigl(hp-k-\frac{ip-j}{n}\Bigr) \text{ and } 0 \leq hp-k-\frac{ip-j}{n} \leq 2r\frac{ip-j}{n}.\vspace{0.4mm}
\]
This is precisely the condition appearing in the definition of $T$.

\newpage
For each $(i,j,h,k) \in T$, up to a sign, the $x^{hp-k}$-coefficient in \eqref{eq:expand} is given by\vspace{-1.1mm}
\[
    \sum_{\ell_1+\ell_2=(hp-k-\frac{ip-j}{n})/r}\!\binom{(ip-j)/n}{\ell_1}\!\binom{(ip-j)/n}{\ell_2}\lambda^{(ip-j)/n-\ell_2} = \sum_{\ell_1=\max(0,b-a)}^{\min(a,b)}\!\binom{a}{\ell_1}\!\binom{a}{b-\ell_1}\lambda^{a-b+\ell_1},\vspace{-0.6mm}
\]
where we define\vspace{-1.4mm}
\[
    a \coloneqq \frac{ip-j}{n}\ \text{ and }\ b \coloneqq \frac{hp-k-\frac{ip-j}{n}}{r}.\vspace{0.8mm}
\]
We divide the proof into two cases according to whether $b \leq a$ or $b > a$:\vspace{-1.1mm}
\begin{itemize}
    \item If $b \leq a$, then a straightforward computation (cf. \cite[Lemma A.1]{OK}) leads to\vspace{-1mm}
    \[
        \sum_{\ell_1=0}^b\!\binom{a}{\ell_1}\!\binom{a}{b-\ell_1}\lambda^{a-b+\ell_1} = \lambda^{a-b} \cdot \binom{a}{b}G^{(b)}(-a,-b,1+a-b \mid \lambda).\vspace{-1.1mm}
    \]
    Also, we have\vspace{-0.7mm}
    \[
        -a \equiv \frac{j}{n}, \quad -b \equiv \frac{k}{r} - \frac{j}{nr}, \ \text{ and }\ 1+a-b \equiv 1-\frac{j}{n}+\frac{k}{r} - \frac{j}{nr}\vspace{0.9mm}
    \]
    modulo $p$, and thus the $x^{hp-k}$-coefficient in \eqref{eq:expand} vanishes if and only if\vspace{-0.6mm}
    \[
        G^{(b)}\scalebox{0.95}{$\displaystyle\left(\frac{j}{n},\hspace{0.3mm}\frac{k}{r}-\frac{j}{nr},\hspace{0.5mm}1-\frac{j}{n}+\frac{k}{r}-\frac{j}{nr}\ \middle|\ \lambda\right)$} = 0\vspace{-0.4mm}
    \]
    since $b \leq a < p$ and $\lambda \neq 0$.\vspace{-1.3mm}
    \item If $b > a$, then a straightforward computation (cf. \cite[Lemma A.1]{OK}) leads to\vspace{-1mm}
    \[
        \sum_{\ell_1=b-a}^a\!\binom{a}{\ell_1}\!\binom{a}{b-\ell_1}\lambda^{a-b+\ell_1} = \binom{a}{2a-b}G^{(2a-b)}(-a,b-2a,1+b-a \mid \lambda).\vspace{-0.8mm}
    \]
    Also, we have\vspace{-2.1mm}
    \[
        -a \equiv \frac{j}{n}, \quad b-2a \equiv -\frac{k}{r}+\frac{2j}{n}+\frac{j}{nr}, \ \text{ and }\ 1+b-a \equiv 1+\frac{j}{n}-\frac{k}{r}+\frac{j}{nr}
    \]
    modulo $p$, and thus the $x^{hp-k}$-coefficient in \eqref{eq:expand} vanishes if and only if\vspace{-0.6mm}
    \[
        G^{(2a-b)}\scalebox{0.95}{$\displaystyle\left(\frac{j}{n},-\frac{k}{r}+\frac{2j}{n}+\frac{j}{nr},\hspace{0.5mm}1+\frac{j}{n}-\frac{k}{r}+\frac{j}{nr} \ \middle|\  \lambda\right)$} = 0\vspace{-0.9mm}
    \]
    since $2a-b < a < p$.\vspace{-0.6mm}
\end{itemize}
Combining these two cases, we obtain the desired assertion.\vspace{-1mm}
\end{proof}
\begin{Exp}
Let $C_\lambda$ be a genus-$g$ hyperelliptic curve defined by the equation\vspace{-0.5mm}
\[
    C_\lambda: y^2 = x(x^g-1)(x^g-\lambda)\ \text{ with }\, \lambda \neq 0,1.\vspace{0.8mm}
\]
In this case, the set $T$ in \eqref{eq:T} is given by\vspace{-0.6mm}
\[
    T = \left\{
    \,(1,1,h,k) \in \mathbb{Z}^4\ \middle|
    \begin{array}{l}
        1 \leq h,\hspace{-0.1mm}k \leq g,\ g \mid\hspace{-0.2mm} \bigl(hp-k -\hspace{-0.2mm} \frac{p-1}{2}\bigr),\\
        0 \leq hp-k-\hspace{-0.2mm}\frac{p-1}{2} \hspace{-0.2mm}\leq g(p-1)
    \end{array}\hspace{-1mm}\right\}.
\]
Applying Theorem \ref{thm:Clambda}, the hyperelliptic curve $C_\lambda$ is superspecial if and only if the following conditions hold:\vspace{-0.8mm}
\begin{itemize}
    \item If $hp-k-\hspace{-0.2mm}\frac{p-1}{2} \hspace{-0.2mm}\leq \frac{g(p-1)}{2}$, then\vspace{-1.5mm}
    \[
        G^{((hp-k-\frac{p-1}{2})/g)}\scalebox{0.95}{$\displaystyle\left(\frac{1}{2},\hspace{0.3mm}\frac{k}{g}-\frac{1}{2g},\hspace{0.3mm}\frac{1}{2}+\frac{k}{g}-\frac{1}{2g} \ \middle|\  \lambda\right)$} = 0.\vspace{-1.6mm}
    \]
    \item If $hp-k-\hspace{-0.2mm}\frac{p-1}{2} \hspace{-0.2mm}> \frac{g(p-1)}{2}$, then\vspace{-0.6mm}
    \[
        G^{(p-1-(hp-k-\frac{p-1}{2})/g)}\scalebox{0.95}{$\displaystyle\left(\frac{1}{2},\hspace{0.3mm}1-\frac{k}{g}+\frac{1}{2g},\hspace{0.3mm}\frac{3}{2}-\frac{k}{g}+\frac{1}{2g} \ \middle|\  \lambda\right)$} = 0.\vspace{-0.8mm}
    \]
\end{itemize}
for every $(1,1,h,k) \in T$.\vspace{-3mm}
\end{Exp}

\newpage
\section{Main results}\label{sec:main}
In the following, let $K$ be an algebraically closed field of characteristic $p > 13$, and we consider plane quintic curves over $K$, namely,\vspace{-0.4mm}
\[
    C: F(X,Y,Z) = 0 \quad \begin{array}{l}\text{\hspace{-0.2mm}where $F(X,Y,Z) \in K[X,Y,Z]$ is an irreducible}\\[-0.2mm]\text{homogeneous polynomial of degree 5},\end{array}\vspace{0.5mm}
\]
which are non-hyperelliptic curves of genus $6$.
Badr and Bars~\cite{BB} classified all the plane quintic curves over $K$ into $14$ types according to their automorphism groups, and gave an explicit defining equation for each type.
Table~\ref{tbl:classification} below lists the types whose families have dimension at most $1$ (we denote by $\mathbf{G}_n$ a group of order $n$ for a positive integer $n$).\vspace{1mm}

\begin{table}[htbp]
    \centering
    \begin{tabular}{c||c|c|c}\hline
       Type & ${\rm Aut}(C)$ & Defining equation of $C$ & Conditions\\\hline
       \textit{1} & $\mathbf{G}_{150}$ & $X^5+Y^5+Z^5 = 0$ & ------\\[-0.2mm]
       \textit{2} & $\mathbf{G}_{39}$ & $X^4Y+Y^4Z+Z^4X = 0$ & ------\\[-0.2mm]
       \textit{3} & $\mathbf{G}_{30}$ & $X^5+Y^4Z+YZ^4 = 0$ & ------\\[-0.2mm]
       \textit{4} & $\mathbb{Z}/20\mathbb{Z}$ & $X^5+Y^5+XZ^4 = 0$ & ------\\[-0.2mm]
       \textit{5} & $\mathbb{Z}/16\mathbb{Z}$ & $X^5+Y^4Z+XZ^4 = 0$ & ------\\[-0.2mm]
       \textit{6} & $\mathbb{Z}/10\mathbb{Z}$ & $X^5+Y^5+XZ^4+rX^3Z^2 = 0$ & $r \neq 0,\pm 2, \hspace{0.5mm}r^2 \neq 20$\\[-0.2mm]
       \textit{8} & $\mathbb{Z}/8\mathbb{Z}$ & $X^5+Y^4Z+XZ^4+rX^3Z^2 = 0$ & $r \neq 0,\pm 2$\\\hline
    \end{tabular}\vspace{-0.7mm}
    \caption{Automorphism groups of plane quintic curves with families of dimension $\leq 1$}\label{tbl:classification}\vspace{0.7mm}
\end{table}
\noindent In Section \ref{subsec:type1-5}, we give necessary and sufficient conditions on $p$ for the plane quintic curves corresponding to the $0$-dimensional families (i.e., Types \hspace{-0.2mm}\textit{1}--\textit{5}\hspace{-0.2mm}) to be superspecial.
In Section \ref{subsec:type6} \hspace{0.2mm}(resp. Section \ref{subsec:type8}), we discuss the superspeciality of plane quintic curves of Type \hspace{-0.2mm}\textit{6} (resp. \hspace{-0.4mm}Type \hspace{-0.2mm}\textit{8}\hspace{-0.2mm}).

\setcounter{equation}{0}
\subsection{Plane quintic curves with 0-dimensional families}\label{subsec:type1-5}\vspace{0.5mm}
First, we determine the characteristics $p$ for which plane quintic curves of Types \hspace{-0.2mm}\textit{1}--\textit{2} are superspecial.\vspace{-1mm}
\begin{Prop}\label{prop:type1-2}
The following statements hold:\vspace{-0.6mm}
\begin{enumerate}
\item[(1)] The Fermat quintic $X^5+Y^5+Z^5 = 0$ is superspecial if and only if $p \equiv 4 \pmod{5}$.\vspace{-2.6mm}
\item[(2)] The Hurwitz quintic $X^4Y+Y^4Z+Z^4X = 0$ is superspecial if and only if $p \equiv 4,10,12 \pmod{13}$.\vspace{0.3mm}
\end{enumerate}
\end{Prop}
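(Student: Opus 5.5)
For both parts I would not use the superelliptic machinery. Instead I would use the standard criterion for plane curves (Stöhr–Voloch): a smooth plane curve $C: F=0$ of degree $d$ is superspecial if and only if its Cartier–Manin matrix vanishes. Concretely, the condition is that every coefficient of $X^{pa-u}Y^{pb-v}Z^{pc-w}$ in $F^{p-1}$ is $0$ in $K$, for all positive integers $(a,b,c)$ and $(u,v,w)$ with $a+b+c=d$ and $u+v+w=d$. For $d=5$ there are $6$ admissible triples on each side, i.e.\ the $6\times 6$ Hasse–Witt matrix. Both curves have few monomials, so $F^{p-1}$ expands by the multinomial theorem:
\[
F^{p-1}=\sum_{\alpha+\beta+\gamma=p-1}\frac{(p-1)!}{\alpha!\,\beta!\,\gamma!}\,M_1^\alpha M_2^\beta M_3^\gamma .
\]
Since $\alpha,\beta,\gamma<p$, every multinomial coefficient is nonzero mod $p$. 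Moreover, in both cases the map $(\alpha,\beta,\gamma)\mapsto$ exponent vector is injective, so no cancellation occurs. Hence $C$ is superspecial if and only if no target monomial $X^{pa-u}Y^{pb-v}Z^{pc-w}$ occurs in $F^{p-1}$. This turns the problem into a purely arithmetic one about integer solutions.

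For (1), the monomials of $F^{p-1}$ are $X^{5\alpha}Y^{5\beta}Z^{5\gamma}$ with $\alpha+\beta+\gamma=p-1$. So we need $pa\equiv u$, $pb\equiv v$, $pc\equiv w \pmod 5$, together with the ranges $0\le pa-u\le 5(p-1)$, and so on.
\begin{itemize}
\item If $p\equiv 4\pmod 5$, the congruences force $u\equiv -a$, hence $u=5-a$ because $1\le a,u\le 3$, and similarly $v=5-b$ and $w=5-c$. Then $u+v+w=15-5=10\ne 5$, a contradiction, so the curve is superspecial.
\item If $p\equiv 1\pmod 5$, take $(u,v,w)=(a,b,c)$, e.g.\ $(1,1,3)$. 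Then $(\alpha,\beta,\gamma)=\frac{p-1}{5}(1,1,3)$ is a valid solution.
\item If $p\equiv 2,3\pmod 5$, I would exhibit an explicit admissible pair in each case. For $p\equiv 2$ take $(a,b,c)=(1,1,3)$ and $(u,v,w)=(2,2,1)$. For $p\equiv 3$ take $(a,b,c)=(1,2,2)$ and $(u,v,w)=(3,1,1)$.
\end{itemize}
In each case, check that the resulting $\alpha,\beta,\gamma$ are nonnegative and sum to $p-1$. The sum condition is automatic, since $p(a+b+c)-(u+v+w)=5(p-1)$.

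For (2), the monomials are $X^{4\alpha+\gamma}Y^{\alpha+4\beta}Z^{\beta+4\gamma}$, coming from $X^4Y$, $Y^4Z$ and $Z^4X$ respectively. Set $E=pa-u$, $E'=pb-v$, $E''=pc-w$. Solving the linear system $4\alpha+\gamma=E$, $\alpha+4\beta=E'$, $\beta+4\gamma=E''$ (the determinant is $65$) gives
\[
65\alpha = 16E-4E'+E'' ,
\]
with the cyclic analogues for $\beta$ and $\gamma$. Write $E=pa-u$ and so on. The integrality condition then becomes a congruence modulo $65$ in $p$ and the six small integers. Modulo $5$ it is automatic from the degree count, so the real constraint is modulo $13$, namely
\[
p\,(16a-4b+c)\equiv 16u-4v+w \pmod{13}.
\]
Nonnegativity of $\alpha,\beta,\gamma$ must also hold.

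The plan is to tabulate the linear form $L(a,b,c)=16a-4b+c=3a-4b+c \pmod{13}$ over the $6$ triples. Then, for each residue $p \bmod 13$, decide whether some pair of triples satisfies $pL(a,b,c)\equiv L(u,v,w)$ with the nonnegativity conditions. I expect the residues $4,10,12$ to be exactly those for which multiplication by $p$ never maps $L$-values of one triple to $L$-values of another. Note that $\{1,3,9\}$ generated by $3$, and $-1\cdot\{1,3,9\}=\{12,10,4\}$, which matches the claimed set. For each of the other nine residues, I would produce an explicit witness pair and verify that $\alpha,\beta,\gamma\ge 0$.

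The main obstacle is this nonnegativity check in (2). The congruence alone is not sufficient: a congruence solution could still give a negative $\alpha$, since the inverse matrix has negative entries. So for the nine bad residues I must choose witnesses carefully, and I will use that $p>13$ to bound error terms. For the three good residues I must make sure that the congruence alone already rules out every pair.
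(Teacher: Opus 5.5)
Your route is genuinely different from the paper's. The paper does not expand $F^{p-1}$ directly. Part (1) is quoted from Kodama--Washio. Part (2) is reduced, via Montanucci--Speziali's Proposition~4.1, to a pair of congruences in $(h,k)$ modulo $p$ with the ordering constraint $0\le k\le h\le p-1$, for six index pairs $(i,j)$. The paper then tabulates the resulting $(s,t)$ for all twelve residues of $p$ modulo $13$ and checks when $s\le t$ always holds. Your St\"ohr--Voloch computation is self-contained and treats both curves uniformly. Your part (1) is complete and correct as written.

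In part (2) there is a slip you must fix. From $4\alpha+\gamma=E$, $\alpha+4\beta=E'$, $\beta+4\gamma=E''$ one gets $65\alpha=16E+E'-4E''$, not $16E-4E'+E''$. To see this, test $(\alpha,\beta,\gamma)=(1,0,0)$, i.e. $(E,E',E'')=(4,1,0)$: your formula gives $60$. What you wrote is the transposed inverse, which belongs to $X^4Z+Y^4X+Z^4Y$. The admissible triples are stable under $b\leftrightarrow c$, so the final residue set does not change. However, any witness you verify with your formula would be a witness for the wrong monomial pattern.

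Once corrected, the step you call the main obstacle disappears. For $t=(a,b,c)$ put $A(t)=16a+b-4c$, $B(t)=-4a+16b+c$, $C(t)=a-4b+16c$. On all six triples these values lie in $\{5,10,15,25,30,45\}$, so for $s=(u,v,w)$
\[
65\alpha=pA(t)-A(s)\ \geq\ 5p-45>0,
\]
and likewise for $\beta$ and $\gamma$. Thus nonnegativity is automatic. Since $\gamma=E-4\alpha$ and $\beta=E''-4\gamma$, integrality of $\alpha$ forces integrality of all three. Hence the coefficient is nonzero if and only if $13\mid pA'(t)-A'(s)$, where $A'=A/5$.

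On $(3,1,1),(1,3,1),(1,1,3),(2,2,1),(2,1,2),(1,2,2)$ the values of $A'$ are $9,3,1,6,5,2$. These are exactly $\langle 3\rangle\cup 2\langle 3\rangle\subset(\mathbb{Z}/13\mathbb{Z})^\times$. Since $2$ is a primitive root modulo $13$, the quotient by $\langle 3\rangle$ is cyclic of order $4$, generated by the class of $2$. So $p\bigl(\langle 3\rangle\cup 2\langle 3\rangle\bigr)$ misses $\langle 3\rangle\cup 2\langle 3\rangle$ if and only if $p\in 4\langle 3\rangle=\{4,10,12\}$.

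This finishes (2) without any case-by-case witnesses. It also explains the residue set conceptually, whereas the paper's $(s,t)$ table only exhibits it.
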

\begin{proof}
(1) This is a direct consequence of \cite[Corollary 1]{KW} applied to the case $m=5$.\par
\hspace{5.8mm}(2) Applying \cite[Proposition 4.1]{MS} to the case $n=4$, we see that this curve is superspecial if and only if, for every $(i,j) \in\hspace{-0.2mm} \{(0,0),(0,1),(1,0),(0,2),(1,1),(2,0)\} \eqqcolon U$, the system of congruences
\begin{equation}\label{eq:Case2}
    \left\{\,
    \begin{aligned}
        4k-h+i &\equiv 0,\\[-1.2mm]
        4(h-k)+k+j &\equiv p-1
    \end{aligned}
    \right. \pmod{p}\vspace{-0.2mm}
\end{equation}
has no integer solution $(h,k)$ satisfying the inequality $0 \leq k \leq h \leq p-1$.
By solving \eqref{eq:Case2} for each $(i,j) \in U$, we obtain\vspace{-0.7mm}
\[
    13h \equiv -3i-4j-4 \ \,\text{ and }\ 13k \equiv -4i-j-1 \pmod{p}.\vspace{1.2mm}
\]
Therefore, for some integers $s$ and $t$, we can write\vspace{-0.9mm}
\[
    h = \frac{sp-(3i+4j+4)}{13} \ \,\text{ and }\ k = \frac{tp-(4i+j+1)}{13}.\vspace{0.8mm}
\]
The integer solution $(h,k)$ satisfies $0 \leq h,k \leq p-1 $ if and only if $s,t \in \{1,\dots,12\}$. Equivalently, $s$ and $t$ are the smallest positive integers satisfying\vspace{-0.4mm}
\[
    \left\{\,
    \begin{aligned}
        sp &\equiv 3i+4j+4,\\[-1.2mm]
        tp &\equiv 4i+j+1
    \end{aligned}
    \right. \pmod{13}.\vspace{-0.1mm}
\]
Thus, these values are determined solely by $(i,j)$ and $p \bmod\hspace{-0.2mm} 13$.
Table \ref{tbl:st} below gives the corresponding values of $(s,t)$ for each $(i,j) \in U$ and $p \bmod{13}$.

\newpage
\begin{table}[tbp]\vspace{-6mm}
    \centering
    \rotatebox{90}{$\hspace{-12mm}p \bmod 13$}\hspace{-1mm}
    \begin{tabular}{c|cccccc}
       \multicolumn{1}{c}{} & \multicolumn{6}{c}{$(i,j)$}\\[2mm]
       & $(0,0)$ & $(0,1)$ & $(1,0)$ & $(0,2)$ & $(1,1)$ & $(2,0)$\\\hline
       $1$ & $(4,1)$ & $(8,2)$ & $(7,5)$ & $(12,3)$ & $(11,6)$ & $(10,9)$\\
       $2$ & $(2,7)$ & $(4,1)$ & $(10,9)$ & $(6,8)$ & $(12,3)$ & $(5,11)$\\
       $3$ & $(10,9)$ & $(7,5)$ & $(11,6)$ & $(4,1)$ & $(8,2)$ & $(12,3)$\\
       $4$ & $(1,10)$ & $(2,7)$ & $(5,11)$ & $(3,4)$ & $(6,8)$ & $(9,12)$\\
       $5$ & $(6,8)$ & $(12,3)$ & $(4,1)$ & $(5,11)$ & $(10,9)$ & $(2,7)$\\
       $6$ & $(5,11)$ & $(10,9)$ & $(12,3)$ & $(2,7)$ & $(4,1)$ & $(6,8)$\\
       $7$ & $(8,2)$ & $(3,4)$ & $(1,10)$ & $(11,6)$ & $(9,12)$ & $(7,5)$\\
       $8$ & $(7,5)$ & $(1,10)$ & $(9,12)$ & $(8,2)$ & $(3,4)$ & $(11,6)$\\
       $9$ & $(12,3)$ & $(11,6)$ & $(8,2)$ & $(10,9)$ & $(7,5)$ & $(4,1)$\\
       $10$ & $(3,4)$ & $(6,8)$ & $(2,7)$ & $(9,12)$ & $(5,11)$ & $(1,10)$\\
       $11$ & $(11,6)$ & $(9,12)$ & $(3,4)$ & $(7,5)$ & $(1,10)$ & $(8,2)$\\
       $12$ & $(9,12)$ & $(5,11)$ & $(6,8)$ & $(1,10)$ & $(2,7)$ & $(3,4)$
    \end{tabular}\vspace{-1mm}
    \caption{Corresponding values of $(s,t)$ for each $(i,j) \in U$ and $p \bmod{13}$}\label{tbl:st}
\end{table}

Furthermore, the condition $k \leq h$ is equivalent to $(t-s)p \leq i-3j-3$.
For each $(i,j) \in U$, we obtain the following:\vspace{-0.4mm}
\begin{itemize}
    \item If $t < s$, then $(t-s)p < -p < -13 < i-3j-3$.\vspace{-2.4mm}
    \item If $s \leq t$, then $(t-s)p \geq 0 > i-3j-3$.\vspace{0.4mm}
\end{itemize}
This tells us that $k \leq h$ holds if and only if $t < s$.
Consequently, it suffices to determine whether $s \leq t$ holds for all $(i,j) \in U$.
By Table~\ref{tbl:st}, this condition is satisfied if and only if $p \equiv 4, 10, 12 \pmod{13}$.
This completes the proof.
\end{proof}

Next, we determine the characteristics $p$ for which plane quintic curves of Types \hspace{-0.2mm}\textit{3}--\textit{5} are superspecial by using the results obtained in the first half of Section \ref{sec:preliminaries}.\vspace{-1mm}
\begin{Prop}\label{prop:type3-5}
The following statements hold:\vspace{-0.6mm}
\begin{enumerate}
\item[(1)] The plane quintic $X^5+Y^4Z+YZ^4 = 0$ is superspecial if and only if $p \equiv 4 \pmod{5}$.\vspace{-2.6mm}
\item[(2)] The plane quintic $X^5+Y^5+XZ^4 = 0$ is superspecial if and only if $p \equiv 19 \pmod{20}$.\vspace{-2.6mm}
\item[(3)] The plane quintic $X^5+Y^4Z+XZ^4 = 0$ is superspecial if and only if $p \equiv 15 \pmod{16}$.\vspace{0.3mm}
\end{enumerate}
\end{Prop}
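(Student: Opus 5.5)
Each of the three curves is, after a change of coordinates, a superelliptic curve of the form $y^n = x^m - x$. We then apply the criterion that $S$ in \eqref{eq:S} is empty, using Corollary~\ref{cor:x^d-x} for sufficiency and an explicit element of $S$ for necessity.

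\textbf{Reductions.} For (1), set $Z=1$ and write $X^5 = -(Y^4+Y)$. Rescaling $X$ by a fifth root of $-1$ gives $x^5 = y^4+y$, which is the superelliptic curve $y^5 = x^4 - x$ (after renaming variables, up to sign). Here $(n,m)=(5,4)$ and $(m-1)n = 15$. For (2), set $X=1$ in $X^5+Y^5+XZ^4=0$; this gives $y^5 = -(1+z^4)$. The projective transformation swapping roles, $x = Z/X$ and $y = Y/X$, applied to $X^5+Y^5+XZ^4$, yields $y^5 = -(x^4+1)$. The binary form $F(X,Z)$ of degree $5$ here is $X^5+XZ^4 = X(X^4+Z^4)$, with $n=5$. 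By Proposition~\ref{prop:isomorphic} this is $\mathrm{GL}_2$-equivalent to $x^5 - x$, because $X(X^4+Z^4)$ and $X(X^4-Z^4)$ differ by scaling $Z$ by a primitive eighth root of unity. So the curve is $y^5 = x^5 - x$, with $(n,m)=(5,5)$ and $(m-1)n=20$. For (3), set $Z=1$: $Y^4 = -(X^5+X)$, which similarly becomes $y^4 = x^5-x$, with $(n,m)=(4,5)$ and $(m-1)n = 16$.

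\textbf{Sufficiency.} This follows from Corollary~\ref{cor:x^d-x}(i): $p\equiv -1 \pmod{15}$, $\pmod{20}$, $\pmod{16}$ respectively gives superspeciality. In case (1), $n=5$ divides $m+1=5$, so (ii) also gives $p\equiv 4 \pmod{15}$. Hence the sufficient condition is $p\equiv 4,14 \pmod{15}$, together with $p \equiv 4 \pmod 5$ meaning also $p\equiv 9 \pmod{15}$. The residue $9 \bmod 15$ is impossible for a prime, since it is divisible by $3$. So (1) reduces exactly to $p\equiv 4,14\pmod{15}$, which for primes $p>13$ is $p\equiv 4\pmod 5$.

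\textbf{Necessity.} This is the main work. For each residue class of $p$ modulo $(m-1)n$ coprime to it and not covered above, I would exhibit an explicit $(i,j,h,k)\in S$. First, take $j \equiv ip \pmod n$; for the class of $p$ modulo $n$ this determines $j$ from $i$. Then search small $h,k$ (for instance $h=1$ or $2$, $k$ small) so that $hp-k-(ip-j)/n$ is divisible by $m-1$ and lies in $[0,(m-1)(ip-j)/n]$. Since $p$ is known modulo $(m-1)n$, the divisibility depends only on the residue class. The inequality is automatic for $p>13$ once $h\le$ roughly $(m-1)i/n$, because both sides are linear in $p$ with comparable leading coefficients. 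I would tabulate, as in the proof of Proposition~\ref{prop:type1-2}, the residue classes: the units mod $15$ (namely $1,2,7,8,11,13$), mod $20$ ($1,3,7,9,11,13,17$), and mod $16$ ($1,3,5,7,9,11,13$), and give a witness for each. The obstacle is doing this casework cleanly. In particular, I need to check that the leading-coefficient inequality $hp - (ip)/n \le (m-1)ip/n$ holds strictly, i.e.\ $h < mi/n$, which is exactly the range allowed in $S$. The lower-order terms then only require $p$ larger than a small constant, guaranteed by $p>13$. Alternatively, for (2) and (1) one could invoke the Fermat-type criterion of \cite{KW}, but the direct tabulation keeps the argument self-contained.
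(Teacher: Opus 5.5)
Your strategy is exactly the paper's. You reduce the three curves to $y^n=x^m-x$ with $(n,m)=(5,4),(5,5),(4,5)$. You get sufficiency from Corollary~\ref{cor:x^d-x}, using (ii) for $(5,4)$ and noting that $p\equiv 9 \pmod{15}$ is impossible. You get necessity by exhibiting an element of $S$. The reductions and the sufficiency half are fine. Your remark that the two inequalities in $S$ hold automatically for $p\ge 17$ once $1\le h<mi/n$ and $1\le k<mj/n$ is also correct, and the paper leaves it implicit. Indeed $(mi-nh)p\ge p>mj-nk$ and $(nh-i)p\ge p>nk-j$ in all three cases. So membership in $S$ reduces to two congruences depending only on $p \bmod (m-1)n$.

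The gap is that the necessity direction, which is the entire content of the ``only if'', is only described and never carried out. You say you would search for and tabulate witnesses, but you give none. Without them, nothing rules out $S=\emptyset$ for some residue class, and in that case the claimed equivalence would be false. The search does succeed; these are the witnesses the paper uses:
\begin{itemize}
\item For (1), you can work modulo $5$ rather than $15$. The tuples $(i,j,h,k)=(2,2,1,1),(2,4,1,2),(4,2,2,1)$ for $p\equiv 1,2,3 \pmod 5$ give $hp-k-\frac{ip-j}{5}=\frac{3(p-1)}{5},\frac{3(p-2)}{5},\frac{3(2p-1)}{5}$. Divisibility by $m-1=3$ is then automatic.
\item For (2), the classes $p\equiv 1,3,7,9,11,13,17 \pmod{20}$ are handled by $(2,2,1,1)$, $(3,4,2,1)$, $(2,4,1,1)$, $(2,3,1,2)$, $(3,3,1,1)$, $(3,4,1,2)$, $(2,4,1,3)$ respectively.
\item For (3), the classes $p\equiv 1,3,5,7,9,11,13 \pmod{16}$ are handled by $(1,1,1,1)$, $(1,3,1,3)$, $(3,3,1,2)$, $(1,3,1,2)$, $(2,2,1,1)$, $(1,3,1,1)$, $(3,3,2,1)$ respectively.
\end{itemize}
Each witness is a short check of $n\mid(ip-j)$ and $(m-1)\mid\bigl(hp-k-\frac{ip-j}{n}\bigr)$. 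With these supplied, your argument coincides with the paper's proof.
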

\begin{proof}
First of all, we recall that the superelliptic curve $y^n = x^m-x$ is superspecial if and only if the set $S$ defined in \eqref{eq:S} is empty.\par
\hspace{5.8mm}(1) This curve is isomorphic to $y^5 = x^4-x$ via the map $(X:Y:Z) = (-y:-x:1)$.
Hence, it follows from Corollary~\ref{cor:x^d-x} that it is superspecial if $p \equiv 4,14 \pmod{15}$, that is, if $p \equiv 4 \pmod{5}$.
In all other cases, by taking $4$-tuple $(i,j,h,k)$ as\vspace{-0.9mm}
\[
    (i,j,h,k) \coloneqq \left\{
        \begin{array}{ll}
            \!(2,2,1,1) & \text{if } p \equiv 1 \hspace{-2mm}\pmod{5},\\
            \!(2,4,1,2) & \text{if } p \equiv 2 \hspace{-2mm}\pmod{5},\\
            \!(4,2,2,1) & \text{if } p \equiv 3 \hspace{-2mm}\pmod{5},
        \end{array}
    \right.\vspace{0.2mm}
\]
we see that $(i,j,h,k) \in S$.
Therefore, this curve is not superspecial.\par
\hspace{5.8mm}(2) This curve is isomorphic to $y^5 = x^5-x$ via the map $(X:Y:Z) =(\zeta^5x:\zeta y:1)$ where $\zeta$ denotes a primitive $40$th root of unity.
Hence, it follows from Corollary~\ref{cor:x^d-x} that it is superspecial if $p \equiv 19 \pmod{20}$.
In all other cases, by taking $4$-tuple $(i,j,h,k)$ as\vspace{0.5mm}
\[
    (i,j,h,k) \coloneqq \left\{
        \begin{array}{ll}
            \!(2,2,1,1) & \text{if } p \equiv 1 \hspace{-2mm}\pmod{20},\\
            \!(3,4,2,1) & \text{if } p \equiv 3 \hspace{-2mm}\pmod{20},\\
            \!(2,4,1,1) & \text{if } p \equiv 7 \hspace{-2mm}\pmod{20},\\
            \!(2,3,1,2) & \text{if } p \equiv 9 \hspace{-2mm}\pmod{20},\\
            \!(3,3,1,1) & \text{if } p \equiv 11 \hspace{-2mm}\pmod{20},\\
            \!(3,4,1,2) & \text{if } p \equiv 13 \hspace{-2mm}\pmod{20},\\
            \!(2,4,1,3) & \text{if } p \equiv 17 \hspace{-2mm}\pmod{20},
        \end{array}
    \right.\vspace{0.2mm}
\]
we see that $(i,j,h,k) \in S$.
Therefore, this curve is not superspecial.

\newpage
\hspace{5.8mm}(3) This curve is isomorphic to $y^4 = x^5-x$ via the map $(X:Y:Z) =(\zeta^4x:\zeta y:1)$ where $\zeta$ denotes a primitive $32$th root of unity.
Hence, it follows from Corollary~\ref{cor:x^d-x} that it is superspecial if $p \equiv 15 \pmod{16}$.
In all other cases, by taking $4$-tuple $(i,j,h,k)$ as\vspace{0.2mm}
\[
    (i,j,h,k) \coloneqq \left\{
        \begin{array}{ll}
            \!(1,1,1,1) & \text{if } p \equiv 1 \hspace{-2mm}\pmod{16},\\
            \!(1,3,1,3) & \text{if } p \equiv 3 \hspace{-2mm}\pmod{16},\\
            \!(3,3,1,2) & \text{if } p \equiv 5 \hspace{-2mm}\pmod{16},\\
            \!(1,3,1,2) & \text{if } p \equiv 7 \hspace{-2mm}\pmod{16},\\
            \!(2,2,1,1) & \text{if } p \equiv 9 \hspace{-2mm}\pmod{16},\\
            \!(1,3,1,1) & \text{if } p \equiv 11 \hspace{-2mm}\pmod{16},\\
            \!(3,3,2,1) & \text{if } p \equiv 13 \hspace{-2mm}\pmod{16},
        \end{array}
    \right.\vspace{-0.1mm}
\]
we see that $(i,j,h,k) \in S$.
Therefore, this curve is not superspecial.
\end{proof}

\subsection{Plane quintic curves with cyclic automorphism groups of order 10}\label{subsec:type6}\vspace{0.5mm}
In this subsection, let us consider a plane quintic curve with an automorphism group containing a subgroup isomorphic to $\mathbb{Z}/10\mathbb{Z}$.
Such a curve can be written as\vspace{-1mm}
\begin{equation}\label{eq:type6}
    C: X^5+Y^5+XZ^4+rX^3Z^2 = 0 \ \text{ with }\ r \neq \pm 2.
\end{equation}
by \cite[Proposition 8]{BB}.
We note that the constraint $r \neq \pm 2$ is imposed so that $C$ is a curve (i.e., non-singular).
In addition, the automorphism group of $C$ can be described as follows:\vspace{-0.8mm}
\begin{itemize}
    \item If $r^2 = 20$, then the automorphism group of $C$ has order $150$.\vspace{-2.6mm}
    \item If $r = 0$, then the automorphism group of $C$ is isomorphic to $\mathbb{Z}/20\mathbb{Z}$.\vspace{-2.6mm}
    \item Otherwise, the automorphism group of $C$ is isomorphic to $\mathbb{Z}/10\mathbb{Z}$.
\end{itemize}
We begin by summarizing some basic properties of such curves:\vspace{-0.9mm}
\begin{Lem}\label{lem:isom_for_type6}
Let $C'$ be another plane quintic curve defined by $X^5+Y^5+XZ^4+r'X^3Z^2 = 0$ with $r' \neq \pm 2$.
Then, the curve $C$ in \eqref{eq:type6} is isomorphic to $C'$ if and only if $r^2 = r'^2$.\vspace{-0.5mm}
\end{Lem}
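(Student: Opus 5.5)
The plan is to reduce the statement to an equivalence question for binary quintic forms via Proposition~\ref{prop:isomorphic}. Write the curve \eqref{eq:type6} as
\[
    Y^5 = -X(X^4+rX^2Z^2+Z^4) \eqqcolon F_r(X,Z).
\]
This is a superelliptic curve \eqref{eq:superelliptic} with $n=m=5$. The form $F_r$ has no multiple factors precisely when $r\neq\pm2$, so Proposition~\ref{prop:isomorphic} applies. Hence $C\cong C'$ if and only if $F_r$ and $F_{r'}$ are ${\rm GL}_2(K)$-equivalent, where scalars are harmless since $K$ is algebraically closed.

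For the ``if'' direction it suffices to treat $r'=-r$. Substituting $X\mapsto iX$ with $i^2=-1$ gives $X^4\mapsto X^4$ and $X^2\mapsto -X^2$. Therefore $F_{-r}(iX,Z)=iF_r(X,Z)$, so the two forms are equivalent.

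For the ``only if'' direction, pass to the root sets in $\mathbb{P}^1$, using the coordinate $x=X/Z$. The roots of $F_r$ are
\[
    B_r=\{0,\pm\alpha,\pm\alpha^{-1}\}, \quad \text{where } \alpha^2+\alpha^{-2}=-r,
\]
and $\infty\notin B_r$. An equivalence yields a Möbius transformation $\varphi$ with $\varphi(B_r)=B_{r'}$. The key step is to show that $\varphi(0)=0$ and that $\varphi$ commutes with $\sigma\colon x\mapsto -x$, so that $\varphi(x)=cx$ or $\varphi(x)=c/x$. The second form is excluded because it sends $0\notin\varphi^{-1}(\infty)$ to $\infty\notin B_{r'}$.

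Once $\varphi(x)=cx$, we get $\{\pm c\alpha,\pm c\alpha^{-1}\}=\{\pm\alpha',\pm\alpha'^{-1}\}$. Then $\{c^2\alpha^2,c^2\alpha^{-2}\}=\{\alpha'^2,\alpha'^{-2}\}$, and multiplying the two elements of each set gives $c^4=1$. Hence $c^2=\pm1$, and summing the elements gives $-r'=\pm(-r)$, that is, $r'^2=r^2$.

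The main obstacle is justifying that $\varphi$ respects $0$ and $\sigma$. The first approach I would try works at the level of curves. Any isomorphism $C\to C'$ of plane quintics is induced by a linear map of $\mathbb{P}^2$, because the $g^2_5$ is unique. It therefore conjugates ${\rm Aut}(C)$ to ${\rm Aut}(C')$, and in particular carries the subgroup $\langle Y\mapsto\zeta_5Y\rangle$ to its counterpart. In the generic case both groups are $\mathbb{Z}/10\mathbb{Z}$, so this subgroup is characteristic, and likewise the involution $X\mapsto -X$ is the unique element of order $2$. The induced map on $\mathbb{P}^1=C/\langle Y\mapsto\zeta_5Y\rangle$ therefore commutes with $\sigma$. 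Consequently it preserves the fixed-point set $\{0,\infty\}$ of $\sigma$, and since $0\in B_r$ while $\infty\notin B_r$, it must fix $0$.

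The remaining exceptional values are handled using the listed automorphism groups. If $r=0$ or $r^2=20$, the isomorphism type of ${\rm Aut}(C)$ (order $20$ or $150$) already forces $r'^2=r^2$, since the other curves in the family have automorphism group $\mathbb{Z}/10\mathbb{Z}$. The only other exceptional values are $r'=0$ or $r'^2=20$, and these are covered by the same argument with the roles of $C$ and $C'$ swapped.

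If that route proves awkward, the fallback is direct casework on which root $\beta\in B_r$ is sent to $0$. Each case determines $\varphi$ up to a small number of choices, and one compares the resulting cross-ratios.
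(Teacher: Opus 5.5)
Your proposal is correct, but its ``only if'' direction takes a genuinely different route from the paper. After the same reduction via Proposition~\ref{prop:isomorphic}, the paper writes $F=X(X^2-\alpha Z^2)(X^2-\beta Z^2)$ with $\alpha\beta=1$ and $\alpha+\beta=-r$. It then quotes Ishii's criterion that two such forms are ${\rm GL}_2(K)$-equivalent if and only if the values of $\alpha/\beta+\beta/\alpha$ agree, and computes this invariant as $r^2-2$. Both directions come at once, and no value of $r$ needs separate treatment.

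You instead get ``if'' from the substitution $X\mapsto iX$ and ``only if'' from rigidity. Since ${\rm Aut}(C)\cong\mathbb{Z}/10\mathbb{Z}$ for generic $r$, an isomorphism normalizes the order-$5$ subgroup and carries the unique involution to the unique involution. So the induced M\"obius map commutes with $x\mapsto -x$. It fixes $0$, the only branch point among the fixed points $0,\infty$, and hence is $x\mapsto cx$. Comparing products and sums of the squared roots then gives $c^4=1$ and $r'=\pm r$. The values $r=0$ and $r^2=20$ are separated by the order of ${\rm Aut}$.

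Your route is self-contained and shows explicitly that any equivalence of the quintic forms fixes $0$ and $\infty$. That is exactly what one must check when importing a criterion from genus-$2$ curves, whose branch locus also contains $\infty$. The price is reliance on the exact automorphism groups from Badr--Bars, in particular that ${\rm Aut}(C)$ is no larger than $\mathbb{Z}/10\mathbb{Z}$ for generic $r$, plus the case split.

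Two minor points. First, the involution of $C$ is $(X:Y:Z)\mapsto(-X:-Y:Z)$, equivalently $Z\mapsto -Z$, not $X\mapsto -X$ alone, since $F_r$ is odd in $X$; it still induces $x\mapsto -x$ on the quotient. Second, uniqueness of the $g^2_5$ is not needed, since any isomorphism $C\to C'$ conjugates ${\rm Aut}(C)$ onto ${\rm Aut}(C')$.
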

\begin{proof}
Let\vspace{-0.5mm}
\[
    F(X,Z) \coloneqq X^5+rX^3Z^2+XZ^4 \ \text{ and }\ G(X,Z) \coloneqq X^5+r'X^3Z^2+XZ^4\vspace{0.7mm}
\]
be binary forms of degree $5$ in $K[X,Z]$.
It is clear that $C$ is isomorphic to the curve defined by $Y^5 = F(X,Z)$ and that $C'$ is isomorphic to the curve defined by $Y^5 = G(X,Z)$.
By Proposition \ref{prop:isomorphic}, they are isomorphic to\\ each other if and only if $F(X,Z)$ and $G(X,Z)$ are equivalent under ${\rm GL}_2(K)$.
Rewriting\vspace{-0.8mm}
\begin{alignat*}{3}
    F(X,Z) &= X(X^2-\alpha Z^2)(X^2-\beta Z^2), & \quad & \alpha+\beta =-r,&\ &\alpha\beta = 1,\\[-1.3mm]
    G(X,Z) &= X(X^2-\alpha'\hspace{-0.3mm}Z^2)(X^2-\beta'\hspace{-0.3mm}Z^2), & \quad & \alpha'\hspace{-0.3mm}+\beta'\hspace{-0.3mm} =-r', &\ & \alpha'\hspace{-0.3mm}\beta'\hspace{-0.3mm} = 1,
\end{alignat*}
it follows from \cite[Corollary 2.1.1]{Ishii} that $F(X,Z)$ and $G(X,Z)$ are equivalent under ${\rm GL}_2(K)$ if and only if\vspace{-0.8mm}
\[
    \frac{\alpha}{\beta} + \frac{\beta}{\alpha} = \frac{\alpha'}{\beta'} + \frac{\beta'}{\alpha'}.\vspace{-0.7mm}
\]
By using the relations $\alpha+\beta=1, \alpha\beta=1$, this condition is equivalent to\vspace{-1mm}
\[
    r^2-2 = \frac{(\alpha+\beta)^2-2\alpha\beta}{\alpha\beta} = \frac{(\alpha'+\beta')^2-2\alpha'\beta'}{\alpha'\beta'} = r'^2-2,
\]
which simplifies to $r^2 = r'^2$, as desired.\vspace{-1.3mm}
\end{proof}
\begin{Cor}\label{cor:rational-type6}
If the plane quintic curve $C$ in \eqref{eq:type6} is superspecial, then $r^2$ belongs to $\mathbb{F}_{p^2}$.\vspace{-1.2mm}
\end{Cor}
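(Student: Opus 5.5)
The plan is to combine the classical fact that superspecial curves are defined over $\mathbb{F}_{p^2}$ with Lemma~\ref{lem:isom_for_type6}, which says that $r^2$ is an isomorphism invariant.

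First, recall the standard result that every superspecial curve of genus $g\ge 2$ over $K$ has a model over $\mathbb{F}_{p^2}$. This follows because the Jacobian is isomorphic to $E^g$ for a supersingular $E$ defined over $\mathbb{F}_{p^2}$ with Frobenius $-p$, so the relative Frobenius $\pi_{p^2}$ acts as $-p$ on the Jacobian. By Torelli (or directly, as in Ekedahl / Ibukiyama--Katsura--Oort), the curve descends to $\mathbb{F}_{p^2}$. Concretely, this gives $C \cong C^{(p^2)}$, where $C^{(p^2)}$ is the Frobenius twist obtained by raising all coefficients to the $p^2$-th power.

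Next, apply this to the curve $C$ in \eqref{eq:type6}. Its Frobenius twist $C^{(p^2)}$ is defined by $X^5+Y^5+XZ^4+r^{p^2}X^3Z^2=0$. This is again a member of the family \eqref{eq:type6}, since $r^{p^2}\ne\pm2$ whenever $r\ne\pm2$. The key point is that $C^{(p^2)}$ is the base change of $C$ along the field automorphism $x\mapsto x^{p^2}$ of $K$, so the two are isomorphic as abstract schemes, but not in general as $K$-curves. Superspeciality, however, gives a $K$-isomorphism $C\cong C^{(p^2)}$. To get it, take a model $C_0$ over $\mathbb{F}_{p^2}$ with $C\cong C_0$. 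Then
\[
C^{(p^2)}\cong C_0^{(p^2)}=C_0\cong C,
\]
where the middle equality holds because $C_0$ is fixed by the $p^2$-power Frobenius.

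Finally, Lemma~\ref{lem:isom_for_type6} applied with $r'=r^{p^2}$ gives $r^2=r^{2p^2}$, that is, $(r^2)^{p^2}=r^2$. Hence $r^2\in\mathbb{F}_{p^2}$.

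The main obstacle is justifying the descent statement, namely that superspecial implies isomorphic to a curve over $\mathbb{F}_{p^2}$. I would cite it from the literature (e.g.\ Ekedahl's work, or the usual Torelli-based argument using that the principal polarization on $E^g$ can also be defined over $\mathbb{F}_{p^2}$) rather than reprove it. Everything after that is formal.
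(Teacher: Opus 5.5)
Your proposal is correct and follows the paper's route: cite Ekedahl for a model over $\mathbb{F}_{p^2}$, deduce $C\cong C^{(p^2)}$, and apply Lemma~\ref{lem:isom_for_type6} with $r'=r^{p^2}$ to get $(r^2)^{p^2}=r^2$. You also supply two details the paper leaves implicit: the check that $r^{p^2}\neq\pm 2$ (needed to apply the lemma), and the explicit chain $C^{(p^2)}\cong C_0^{(p^2)}=C_0\cong C$.
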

\begin{proof}
Suppose that $C$ is superspecial.
As is well known (cf. \cite[Theorem 1.1]{Ekedahl}), any superspecial curve admits a model defined over $\mathbb{F}_{p^2}$.
Hence $C$ is isomorphic to its $p^2$-Frobenius twist\vspace{-1.3mm}
\[
    C^{(p^2)}\hspace{-0.2mm}: X^5+Y^5+r^{p^2}\hspace{-0.5mm}X^3Z^2 +XZ^4 = 0.\vspace{0.4mm}
\]
By Lemma~\ref{lem:isom_for_type6}, this implies that $r^2 = (r^{p^2})^2$.
It follows that $(r^2)^{p^2} = r^2$, and thus $r^2 \in \mathbb{F}_{p^2}$.
\end{proof}

\newpage
Now, fix a root $t \neq 0$ of the quartic equation $t^4+rt^2+1 = 0$.
Then, the plane quintic curve $C$ in \eqref{eq:type6} is isomorphic to the superelliptic curve\vspace{-1.3mm}
\begin{equation}\label{eq:Clambda_for_type6}
    C_\lambda\hspace{-0.2mm}: y^5 = x(x^2-1)(x^2-\lambda) \ \text{ with }\,\lambda \coloneqq t^4\vspace{0.4mm}
\end{equation}
via the map $(X:Y:Z) = (x:-y:t)$.
Since $r = -(t^2+t^{-2})$, we have that $r^2 = t^4+2+t^{-4} = \lambda+2+1/\lambda$, which yields the important relation\vspace{-0.2mm}
\begin{equation}\label{eq:lambda_for_type6}
    \lambda+\frac{1}{\lambda} = r^2\hspace{-0.3mm}-2.
\end{equation}
Note that $\lambda \neq 0$ since $t \neq 0$, and that $\lambda \neq 1$ follows from the assumption $r \neq \pm 2$.
The following proposition characterizes the superspeciality of $C_\lambda$ in terms of Gaussian hypergeometric series:\vspace{-1.2mm}
\begin{Prop}\label{prop:gauss_for_type6}
For the superelliptic curve $C_\lambda$ in \eqref{eq:Clambda_for_type6}, the following statements hold:\vspace{-1.6mm}
\begin{enumerate}
\item[(1)] If $p \not\equiv 4 \pmod{5}$, then there exists no $\lambda \neq 1$ such that $C_\lambda$ is superspecial.\vspace{-2.8mm}
\item[(2)] If $p \equiv 4 \pmod{5}$, then $C_\lambda$ is superspecial if and only if\vspace{-1.2mm}
\begin{equation}\label{eq:Glambda}
    G(\lambda) \coloneqq G^{((3p-7)/10)}\scalebox{0.95}{$\displaystyle\left(\frac{3}{5},\frac{7}{10},\frac{11}{10}\ \middle|\ \lambda\right)$} = 0,\vspace{-0.7mm}
\end{equation}
where $G^{(d)}(a,b,c \mid z)$ is defined in \eqref{eq:Gaussian}.\vspace{-0.6mm}
\end{enumerate}
\end{Prop}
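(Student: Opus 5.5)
We apply Theorem \ref{thm:Clambda} with $n=5$ and $r=2$. Here $2r+1=5=n$, so the conditions defining $T$ in \eqref{eq:T} become $1\le h<i$, $1\le k<j$ and $5\mid(ip-j)$. In particular only $i,j\ge 2$ contribute. Write $a=(ip-j)/5$ and $N=hp-k-a$. We need $2\mid N$ and $0\le N\le 4a$, and the case split of Theorem \ref{thm:Clambda} is at $N\le 2a$. The proof is a finite case analysis on $p \bmod 5$: list the pairs $(i,j)$ with $j\equiv ip \pmod 5$, then the admissible $(h,k)$, and write down the resulting truncated hypergeometric polynomials.

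\emph{Case $p\equiv 4 \pmod 5$.} Here $j\equiv -i$, so the pairs are $(1,4),(2,3),(3,2),(4,1)$. Only $(2,3)$ and $(3,2)$ admit $h,k$.
\begin{itemize}
\item For $(i,j)=(2,3)$ we have $h=1$ and $N=(3p+3-5k)/5$. Parity forces $k=2$, giving $N=(3p-7)/5\le 2a$. The first bullet of Theorem \ref{thm:Clambda} then gives exactly $G^{((3p-7)/10)}(3/5,7/10,11/10\mid\lambda)=G(\lambda)$.
\item For $(i,j)=(3,2)$ we have $k=1$. The value $h=1$ fails parity. For $h=2$ we get $N=(7p-3)/5>2a$, so the second bullet applies and yields $H(\lambda):=G^{((p-1)/2)}(2/5,1/2,11/10\mid\lambda)=0$.
\end{itemize}
Hence $C_\lambda$ is superspecial iff $G(\lambda)=H(\lambda)=0$. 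To obtain (2) we must show that $H(\lambda)=0$ is implied by $G(\lambda)=0$ for $\lambda\ne 0,1$.

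\emph{Main obstacle.} The key step is proving that implication. My first attempt is a truncated, mod-$p$ version of Euler's transformation
\[
{}_2F_1(a,b;c;z)=(1-z)^{c-a-b}\,{}_2F_1(c-a,c-b;c;z).
\]
With $(a,b,c)=(2/5,1/2,11/10)$ we get $c-a-b=1/5$ and $(c-a,c-b)=(7/10,3/5)$, which are exactly the parameters of $G$. So I expect an identity of the form
\[
H(\lambda)\equiv(1-\lambda)^{e}\,G(\lambda)\pmod{\lambda^{M}}
\]
with $e\equiv 1/5$ chosen as a nonnegative integer $<p$ and $M$ large. A degree comparison should upgrade this to an exact polynomial divisibility $G\mid H$ up to a power of $(1-\lambda)$; this is where the bookkeeping is delicate.

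If this fails, the fallback is to compare the two coefficients directly. Both are the $x^{hp-k}$-coefficients of powers of $f=x(x^2-1)(x^2-\lambda)$, that is, entries of the Cartier--Manin matrix. I would relate them through the standard identity expressing coefficients of $f^{a}$ and $f^{a'}$ with $a+a'\equiv -1$ via derivatives, or through the duality/symmetry of the Cartier--Manin matrix, to show that they vanish simultaneously.

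\emph{Case $p\not\equiv 4 \pmod 5$, i.e.\ $p\equiv 1,2,3 \pmod 5$.} For each residue I will exhibit one explicit tuple $(i,j,h,k)\in T$ whose associated truncated polynomial cannot vanish at $\lambda\ne0,1$. Ideally it has degree $0$ and hence is the nonzero constant $1$. Otherwise it should be one whose value is a nonzero multiple of a power of $(1-\lambda)$, which I would detect via a Chu--Vandermonde evaluation at $\lambda=1$ or a closed form when one parameter is $\equiv -d$.

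For instance, for $p\equiv1 \pmod 5$ the pair $(i,j)=(2,2)$ with $h=k=1$ gives $N=(3p-3)/5$, which is even and at most $2a$. I would compute its degree $N/2$ and parameters and check non-vanishing; the pairs $(i,j)=(3,3)$ and $(4,4)$ give further candidates if needed. Analogous searches handle $p\equiv 2,3 \pmod 5$. This proves (1) and completes the plan.
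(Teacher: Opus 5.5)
\textbf{Overall.} Your route is the paper's: you apply Theorem~\ref{thm:Clambda} with $(n,r)=(5,2)$ and split on $p \bmod 5$. For $p\equiv4\pmod5$, three pieces coincide with the paper's proof: your set $T=\{(2,3,1,2),(3,2,2,1)\}$, your two polynomials $G$ and $H=G^{((p-1)/2)}(2/5,1/2,11/10\mid\lambda)$, and the Euler-transformation step.

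\textbf{The case $p\equiv4\pmod5$.} The Euler step is in fact an exact identity $H(\lambda)=(1-\lambda)^{(p+1)/5}G(\lambda)$, with no bookkeeping modulo $\lambda^M$. The degrees agree, since $(p+1)/5+(3p-7)/10=(p-1)/2$. Both sides are polynomial solutions with constant term $1$ of the hypergeometric equation for $(2/5,1/2,11/10)$ over $\mathbb{F}_p$. The recursion $(\ell+1)(\ell+\frac{11}{10})y_{\ell+1}=(\ell+\frac25)(\ell+\frac12)y_\ell$ then fixes all coefficients up to degree $(p-1)/2$. This uses $\frac{11}{10}\equiv\frac{p+11}{10}$, which keeps the left factor non-zero in that range. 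So your Cartier--Manin fallback is not needed.

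\textbf{The gap: part (1).} You only promise part (1), and your one concrete witness fails. For $p\equiv1\pmod5$ the tuple $(2,2,1,1)$ yields $G^{((3p-3)/10)}(2/5,3/10,9/10\mid\lambda)$. Its degree is exactly $(3p-3)/10$ and its constant term is $1$. By Chu--Vandermonde its value at $\lambda=1$ is $(\frac12;\frac{3p-3}{10})/(\frac{9}{10};\frac{3p-3}{10})\neq0$. So it has roots $\lambda\neq0,1$ and excludes nothing.

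\textbf{Why your tests are aimed wrongly.} Every polynomial produced by Theorem~\ref{thm:Clambda} has a parameter $\equiv-d$; that is exactly why it terminates. An evaluation at $\lambda=1$ says nothing about the other roots. Since the constant term is $1$, a witness must be a constant multiple of $(1-\lambda)^d$. This happens when the first and third parameters coincide: the series then collapses by the binomial theorem to $\sum_\ell(b;\ell)\lambda^\ell/\ell!=(1-\lambda)^d$ with $b\equiv-d$. A degree-$0$ witness, which you hoped for, cannot occur for $p>13$.

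\textbf{The witnesses that work.} The paper uses:
\begin{itemize}
\item $(3,3,1,1)$ for $p\equiv1$, giving $G^{((p-1)/5)}(3/5,1/5,3/5\mid\lambda)=(1-\lambda)^{(p-1)/5}$;
\item $(4,3,2,1)$ for $p\equiv2$, giving $G^{((3p-1)/5)}(3/5,1/5,3/5\mid\lambda)=(1-\lambda)^{(3p-1)/5}$;
\item $(3,4,1,2)$ for $p\equiv3$, giving $G^{((p-3)/5)}(4/5,3/5,4/5\mid\lambda)=(1-\lambda)^{(p-3)/5}$.
\end{itemize}
Once you check that each of these tuples lies in $T$, including the parity condition $2\mid N$, part (1) is immediate.
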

\begin{proof}
We apply \hspace{-0.1mm}Theorem~\ref{thm:Clambda} for the case $(n,r) = (5,2)$.
In the following, we divide the proof according to the value of $p \bmod{5}$.\vspace{-1.1mm}
\begin{itemize}
\item If $p \equiv 1 \pmod{5}$, then we see that $(i,j,h,k) \coloneqq (3,3,1,1)$ lies in $T$, where $T$ is the set in \eqref{eq:T}.
Hence, it follows that $C_\lambda$ is superspecial only if\vspace{-1.5mm}
\[
    G^{((p-1)/5)}\scalebox{0.95}{$\displaystyle\left(\frac{3}{5},\frac{1}{5},\frac{3}{5}\ \middle|\ \lambda\right)$} = 0.\vspace{-0.6mm}
\]
Since this left-hand side is equal to $(1-\lambda)^{(p-1)/5}$ by the binomial theorem, there does not exist $\lambda \neq 1$ such that $C_\lambda$ is superspecial.\vspace{-1.5mm}
\item If $p \equiv 2 \pmod{5}$, then we see that $(i,j,h,k) \coloneqq (4,3,2,1)$ lies in $T$, where $T$ is the set in \eqref{eq:T}.
Hence, it follows that $C_\lambda$ is superspecial only if\vspace{-1.5mm}
\[
    G^{((3p-1)/5)}\scalebox{0.95}{$\displaystyle\left(\frac{3}{5},\frac{1}{5},\frac{3}{5}\ \middle|\ \lambda\right)$} = 0.\vspace{-0.6mm}
\]
Since this left-hand side is equal to $(1-\lambda)^{(3p-1)/5}$ by the binomial theorem, there does not exist $\lambda \neq 1$ such that $C_\lambda$ is superspecial.\vspace{-1.5mm}
\item If $p \equiv 3 \pmod{5}$, then we see that $(i,j,h,k) \coloneqq (3,4,1,2)$ lies in $T$, where $T$ is the set in \eqref{eq:T}.
Hence, it follows that $C_\lambda$ is superspecial only if\vspace{-1.5mm}
\[
    G^{((p-3)/5)}\scalebox{0.95}{$\displaystyle\left(\frac{4}{5},\frac{3}{5},\frac{4}{5}\ \middle|\ \lambda\right)$} = 0.\vspace{-0.6mm}
\]
Since this left-hand side is equal to $(1-\lambda)^{(p-3)/5}$ by the binomial theorem, there does not exist $\lambda \neq 1$ such that $C_\lambda$ is superspecial.\vspace{-1.5mm}
\item If $p \equiv 4 \pmod{5}$, then we see that the set $T$ in \eqref{eq:T} is given by $T = \{(2,3,1,2),(3,2,2,1)\}$.
Therefore, it follows that $C_\lambda$ is superspecial if and only if\vspace{-1mm}
\[
    G^{((3p-7)/10)}\scalebox{0.95}{$\displaystyle\left(\frac{3}{5},\frac{7}{10},\frac{11}{10}\ \middle|\ \lambda\right)$} = G^{((p-1)/2)}\scalebox{0.95}{$\displaystyle\left(\frac{2}{5},\frac{1}{2},\frac{11}{10}\ \middle|\ \lambda\right)$} = 0.\vspace{-0.8mm}
\]
On the other hand, a straightforward computation with Euler’s transformation formula yields\vspace{-0.6mm}
\[
    G^{((p-1)/2)}\scalebox{0.95}{$\displaystyle\left(\frac{2}{5},\frac{1}{2},\frac{11}{10}\ \middle|\ \lambda\right)$} = (1-\lambda)^{(p+1)/5}G^{((3p-7)/10)}\scalebox{0.95}{$\displaystyle\left(\frac{3}{5},\frac{7}{10},\frac{11}{10}\ \middle|\ \lambda\right)$},\vspace{-0.9mm}
\]
and hence, the two conditions above are equivalent since $\lambda \neq 1$.
This tells us that $C_\lambda$ is superspecial if and only if $G(\lambda) = 0$.\vspace{-0.6mm}
\end{itemize}
Combining these four cases, we obtain the desired assertion.\vspace{-2mm}
\end{proof}

\newpage
The following lemma collects some properties of the polynomial $G(\lambda)$ defined in \eqref{eq:Glambda} by using arguments similar to those in \cite[Section 1.4]{IKO}.\vspace{-1.3mm}
\begin{Lem}\label{lem:simple_for_type6}
If $p \equiv 4 \pmod{5}$, then the following statements hold:\vspace{-1.1mm}
\begin{enumerate}
\item[(1)] The degree of $G(\lambda)$ is equal to $(3p-7)/10$.\vspace{-2.6mm}
\item[(2)] The roots of $G(\lambda)$ are different from $0$ and $1$.\vspace{-2.6mm}
\item[(3)] The polynomial $G(\lambda)$ does not have any multiple root.\vspace{-0.3mm}
\end{enumerate}
\end{Lem}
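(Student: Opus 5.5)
We take $p \equiv 4 \pmod{5}$ throughout, put $d \coloneqq (3p-7)/10$, $a = 3/5$, $b = 7/10$, $c = 11/10$, and read all parameters modulo $p$. Then $G(\lambda)=\sum_{\ell=0}^{d} \frac{(a;\ell)(b;\ell)}{(c;\ell)(1;\ell)}\lambda^\ell$. The plan is to find the integer representatives of $-a,-b,1-c$ in $[0,p)$, use them to control which Pochhammer symbols vanish, and then treat (1)--(3) in order.

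\emph{Representatives.} Write $p=10u+9$. Since $5\mid 2p+2$, we have $3/5 \equiv -(2p+2)/5$, so $-a \equiv (2p+2)/5 = 4u+4$. Likewise $-b \equiv (3p-7)/10 = d$, and $1-c \equiv -1/10 \equiv (p-9)/10 = u$. The integer $d=3u+2$ satisfies $d<4u+4<p$.

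\emph{Nonvanishing of the coefficients.}
\begin{itemize}
\item $(a;\ell)$ vanishes mod $p$ only when $\ell > 4u+4$, so it is nonzero for $\ell\le d$.
\item $(b;\ell)$ is nonzero for $\ell\le d$ and vanishes for $\ell=d+1$. This is why $G$ is exactly the truncation $G^{(d)}$, matching the $b\le a$ case of Theorem~\ref{thm:Clambda}.
\item $(c;\ell)$ contains a factor $\equiv 0$ once $\ell > p-1-u$. Since $p-1-u = 9u+8 > d$, the denominators are nonzero for $\ell \le d$, and $(1;\ell)=\ell!$ is nonzero because $d<p$.
\end{itemize}

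\emph{Statements (1) and (2).} By the above, the top coefficient of $G$ is nonzero, which proves (1). The constant term is $1$, so $0$ is not a root. For $\lambda=1$, I would use Chu--Vandermonde in truncated form. Because $-b\equiv d$ is a nonnegative integer, the truncated sum equals the terminating sum ${}_2F_1(a,-d;c;1)=(c-a;d)/(c;d)$, and this identity holds in $\mathbb{F}_p$ since every denominator is a unit. Here $c-a = 1/2 \equiv -(p-1)/2$, so $(1/2;d)=0$ if and only if $d>(p-1)/2$. But $d=(3p-7)/10<(p-1)/2$, hence $G(1)\neq0$.

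\emph{Statement (3).} The polynomial $G$ is the reduction of a terminating hypergeometric polynomial, so it satisfies the Gauss equation
\[
\lambda(1-\lambda)G''+\bigl(c-(a+b+1)\lambda\bigr)G'-abG=0
\]
in $\mathbb{F}_p[\lambda]$. I would verify this coefficientwise: the recurrence $(\ell+1)(\ell+c)A_{\ell+1}=(\ell+a)(\ell+b)A_\ell$ holds for $\ell<d$. At $\ell=d$ we have $A_{d+1}=0$, and the right-hand side vanishes because $d+b\equiv0$. The equation therefore holds exactly, with no boundary error from truncation. Now suppose $\lambda_0\ne0,1$ is a multiple root, so $G(\lambda_0)=G'(\lambda_0)=0$. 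Differentiating the equation $k$ times gives
\[
\lambda(1-\lambda)G^{(k+2)}+(\text{linear})\,G^{(k+1)}+(\text{const})\,G^{(k)}=0,
\]
and because $\lambda_0(1-\lambda_0)\neq 0$, induction shows $G^{(k)}(\lambda_0)=0$ for every $k$. \textbf{The main obstacle is this step:} in characteristic $p$, vanishing of all derivatives does not force $G\equiv 0$ once $\deg G\ge p$. Here, however, $\deg G=d<p$, so Taylor expansion at $\lambda_0$ is valid and gives $G\equiv0$. That contradicts $G(0)=1$, which proves (3).
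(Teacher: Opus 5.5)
Your proof is correct and follows the same route as the paper: the top coefficient is nonzero for (1); $G(0)=1$ and Chu--Vandermonde with $c-a=1/2$ give (2); and the Gauss equation forces all derivatives to vanish at a multiple root for (3). You are also more careful than the paper in three places: you verify that the truncated series satisfies the equation exactly, you use $\lambda_0(1-\lambda_0)\neq 0$ to run the induction, and you invoke $\deg G<p$ to conclude $G\equiv 0$.

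Two residue representatives are miscomputed, though the conclusions survive. First, $-3/5\equiv(2p-3)/5=4u+3$; your $4u+4$ actually represents $1-a$. Second, $1-c=-1/10\equiv p-1-u=9u+8$, not $(p-9)/10=u$, which is $\equiv -9/10$. So the exact ranges are $(a;\ell)\not\equiv 0$ for $\ell\le 4u+3$ and $(c;\ell)\not\equiv 0$ for $\ell\le 9u+7$, and both still contain all $\ell\le d=3u+2$.
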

\begin{proof}
(1) By definition, the degree of $G(\lambda)$ is at most $(3p-7)/10$.
The $\lambda^{(3p-7)/10}$-coefficient of $G(\lambda)$ is\vspace{-0.9mm}
\[
    \frac{\bigl(\frac{3}{5}\,;\hspace{-0.2mm}\frac{3p-7}{10}\bigr)\hspace{-0.3mm}\bigl(\frac{7}{10}\,;\hspace{-0.2mm}\frac{3p-7}{10}\bigr)}{\bigl(\frac{11}{10}\,;\hspace{-0.2mm}\frac{3p-7}{10}\bigr)\hspace{-0.3mm}\bigl(1\,;\hspace{-0.2mm}\frac{3p-7}{10}\bigr)} \not\equiv 0 \pmod{p},
\]
and hence $G(\lambda)$ is of degree $(3p-7)/10$.\par
\hspace{5.8mm}(2) It is clear that the constant term of $G(\lambda)$ is equal to $1$, and hence $G(0) = 1$.
In addition, it follows from the Chu-Vandermonde identity that\vspace{-1.9mm}
\[
    G(1) \equiv G^{((3p-7)/10)}\scalebox{0.95}{$\displaystyle\left(\frac{3}{5},-\frac{3p-7}{10},\frac{11}{10}\ \middle|\ 1\right)$} = \frac{\bigl(\frac{1}{2}\,;\hspace{-0.2mm}\frac{3p-7}{10}\bigr)}{\bigl(\frac{11}{10}\,;\hspace{-0.2mm}\frac{3p-7}{10}\bigr)} \not\equiv 0 \pmod{p}.\vspace{-1.9mm}
\]
This means that $G(\lambda)$ does not have roots $\lambda=0,1$.\par
\hspace{5.8mm}(3) The Euler's hypergeometric differential equation implies that $\mathcal{D}G(\lambda)=0$ where\vspace{-1.4mm}
\[
    \mathcal{D} \coloneqq 50z(1-z)\frac{d^2}{dz^2} + 5(11-23z)\frac{d}{dz}-21.\vspace{-0.3mm}
\]
Assume that $G(\lambda)$ has a multiple root at $\lambda = \lambda_0$.
Since $G(\lambda_0)=G'(\lambda_0)=0$, the differential equation above yields $G''(\lambda_0) = 0$.
Repeating this argument inductively, we see that all derivatives of $G(\lambda)$ vanish at $\lambda = \lambda_0$.
This contradicts the fact that $G(\lambda)$ is a non-zero polynomial, and thus all roots of $G(\lambda)$ are simple.
\end{proof}

We now prove the following main theorem (Theorem \ref{thm:main-10}), which gives the number of isomorphism classes of superspecial plane quintic curves with a cyclic automorphism group of order $10$.\vspace{-1.3mm}
\begin{proof}[The proof of Theorem \ref{thm:main-10}]
We recall that every plane quintic curve with an automorphism group containing a subgroup isomorphic to $\mathbb{Z}/10\mathbb{Z}$ is isomorphic to $C_\lambda$ in \eqref{eq:Clambda_for_type6} for some $\lambda \neq 0,1$.
Hence, this theorem follows in the case $p \not\equiv 4 \pmod{5}$ from Proposition~\ref{prop:gauss_for_type6}(1).
In the case $p \equiv 4 \pmod{5}$, it follows from Proposition~\ref{prop:gauss_for_type6}(2) and Lemma~\ref{lem:simple_for_type6} that there are exactly $(3p-7)/10$ values of $\lambda \neq 0,1$ for which $C_\lambda$ is superspecial.
Moreover, the number of those for which $\hspace{-0.2mm}{\rm Aut}(C_\lambda) \cong \mathbb{Z}/10\mathbb{Z}$ is given by\vspace{-0.7mm}
\begin{equation}\label{eq:number_of_type6}
    \left\{
        \begin{array}{ll}
            (3p-7)/10-2 & \text{ if } p\equiv 9 \!\!\pmod{20},\\[-0.2mm]
            (3p-7)/10-3 & \text{ if } p\equiv 19 \!\!\pmod{20}.
        \end{array}
    \right.\vspace{-0.8mm}
\end{equation}
Indeed, by the discussion at the beginning of Section~\ref{subsec:type6} and the relation \eqref{eq:lambda_for_type6},\vspace{-1.1mm}
\begin{itemize}
    \item If $\lambda = 9 \pm 4\sqrt{5}$, then the automorphism group of $C_\lambda$ has order 150.
    In this case $C_\lambda\hspace{-0.1mm}$ corresponds to the plane quintic of Type \textit{1}, which is superspecial if and only if $p \equiv 4 \pmod{5}$ by Proposition \ref{prop:type1-2}(1).\vspace{-1.6mm}
    \item If $\lambda = -1$, then the automorphism group of $C_\lambda\hspace{-0.1mm}$ is isomorphic to $\mathbb{Z}/20\mathbb{Z}$.
    In this case $C_\lambda\hspace{-0.1mm}$ corresponds to the plane quintic of Type \textit{4}, which is superspecial if and only if $p \equiv 19 \pmod{20}$ by Proposition \ref{prop:type3-5}(2).\vspace{-1.6mm}
    \item Otherwise, the automorphism group of $C_\lambda$ is isomorphic to $\mathbb{Z}/10\mathbb{Z}$.\vspace{-0.8mm}
\end{itemize}
By Lemma~\ref{lem:isom_for_type6} and the relation~\eqref{eq:lambda_for_type6}, we have $C_\lambda \cong C_{\lambda'}$\hspace{-0.1mm} if and only if $\lambda' \hspace{-0.1mm}\in\hspace{-0.1mm} \{\lambda,1/\lambda\}$.
Therefore, the number of isomorphism classes of superspecial curves $C_\lambda$ with ${\rm Aut}(C_\lambda) \cong \mathbb{Z}/10\mathbb{Z}$ is half of \eqref{eq:number_of_type6}, as desired.\vspace{-1.1mm}
\end{proof}
\begin{Rmk}
One can also show that superspecial plane quintic curves whose automorphism groups contain a subgroup isomorphic to $\mathbb{Z}/10\mathbb{Z}$ exist only if $p \equiv 4 \pmod{5}$, as follows.
For a plane quintic curve $C$ in \eqref{eq:type6}, we choose $\alpha,\beta \in K$ satisfying $\alpha^5 = -4/(r^2-4)$ and $\beta^2 = r^2-4$.
Then, there exists a morphism\vspace{-0.9mm}
\[
    C \longrightarrow H, \quad (X:Y:Z) \longmapsto \biggl(\frac{\alpha Y}{X},\frac{rX^2+2Z^2}{\beta X^2}\biggr) \eqqcolon (u,v),\vspace{-0.3mm}
\]
where $H$ is the genus-2 curve defined by $v^2 = u^5+1$.
By \cite[Corollary 2.8]{KNT}, if $C$ is superspecial, then $H$ must also be superspecial.
Since $H$ is superspecial if and only if $p \equiv 4 \pmod{5}$, we obtain the desired assertion.
\end{Rmk}

\newpage
\subsection{Plane quintic curves with cyclic automorphism groups of order 8}\label{subsec:type8}
In this subsection, let us consider a plane quintic curve with an automorphism group containing a subgroup isomorphic to $\mathbb{Z}/8\mathbb{Z}$.
Such a curve can be written as\vspace{-1.2mm}
\begin{equation}\label{eq:type8}
    C: X^5+Y^4Z+XZ^4+rX^3Z^2 = 0 \ \text{ with }\ r \neq \pm 2.
\end{equation}
by \cite[Corollary 7]{BB}.
We note that the constraint $r \neq \pm 2$ is imposed so that $C$ is a curve (i.e., non-singular).
In addition, the automorphism group of $C$ can be described as follows:\vspace{-0.8mm}
\begin{itemize}
    \item If $r = 0$, then the automorphism group of $C$ is isomorphic to $\mathbb{Z}/16\mathbb{Z}$.\vspace{-2.6mm}
    \item Otherwise, the automorphism group of $C$ is isomorphic to $\mathbb{Z}/8\mathbb{Z}$.
\end{itemize}
A necessary and sufficient condition for such curves to be isomorphic is given as follows:\vspace{-1.2mm}
\begin{Lem}\label{lem:isom_for_type8}
Let $C'$ be another plane quintic curve defined by $\hspace{-0.2mm}X^5+Y^4Z+XZ^4+r'\hspace{-0.2mm}X^3Z^2 = 0$ with $r' \neq \pm 2$.
Then, the curve $C$ in \eqref{eq:type8} is isomorphic to $C'$ if and only if $r^2 = r'^2$.\vspace{-1mm}
\end{Lem}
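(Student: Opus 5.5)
We follow the proof of Lemma~\ref{lem:isom_for_type6} closely, this time with the superelliptic exponent $n=4$. Dehomogenizing at $Z$ is not the natural step, because $Y$ occurs as $Y^4Z$. Instead we rewrite the equation as $Y^4Z = -(X^5+rX^3Z^2+XZ^4)$. Rescaling $Y$ by a fourth root of $-1$ absorbs the sign, so $C$ is isomorphic to the superelliptic curve $Y^4Z = F(X,Z)$ with
\[
F(X,Z) \coloneqq X^5+rX^3Z^2+XZ^4 .
\]
This has the form \eqref{eq:superelliptic} with $n=4$, $m=5$, $p\nmid 4$, and $F$ square-free exactly because $r\neq\pm2$. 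In the same way $C'\cong\{Y^4Z=G(X,Z)\}$ with $G(X,Z)\coloneqq X^5+r'X^3Z^2+XZ^4$. Proposition~\ref{prop:isomorphic} then reduces the question to whether $F$ and $G$ are equivalent under ${\rm GL}_2(K)$.

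Next we factor $F(X,Z)=X(X^2-\alpha Z^2)(X^2-\beta Z^2)$ with $\alpha+\beta=-r$ and $\alpha\beta=1$, and factor $G$ likewise with roots $\alpha',\beta'$. These are exactly the binary quintics treated in the proof of Lemma~\ref{lem:isom_for_type6}. We therefore invoke \cite[Corollary 2.1.1]{Ishii} in the same way: $F\sim G$ under ${\rm GL}_2(K)$ if and only if $\alpha/\beta+\beta/\alpha=\alpha'/\beta'+\beta'/\alpha'$. Using $\alpha\beta=1$, the left side equals $(\alpha+\beta)^2-2=r^2-2$, and likewise the right side equals $r'^2-2$. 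The condition is therefore $r^2=r'^2$.

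The only step that needs care is the first reduction. We must check that the curve $Y^4Z=F(X,Z)$ is genuinely a superelliptic curve in the sense of \eqref{eq:superelliptic}, so that Proposition~\ref{prop:isomorphic} applies. Here $Z^{m-n}=Z$, and the degree $m=5$ is coprime to $n=4$, so nothing degenerates. As a sanity check on the direction ``$\Leftarrow$'', the explicit map $(X:Y:Z)\mapsto(\zeta^2X:\zeta Y:Z)$ with $\zeta^8=-1$ sends $r$ to $-r$: substituting gives $\zeta^{10}X^5+\zeta^4Y^4Z+\zeta^2XZ^4+\zeta^6rX^3Z^2$, and dividing by $\zeta^2$ turns this into $-X^5+\zeta^2Y^4Z+XZ^4-rX^3Z^2$. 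After normalizing signs and the $Y$-coefficient, this is the equation with $r$ replaced by $-r$. That conclusion is consistent with the criterion from \cite{Ishii}.
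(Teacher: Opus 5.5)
Your proof is correct and follows the paper's argument: rewrite $C$ as $Y^4Z=F(X,Z)$, apply Proposition~\ref{prop:isomorphic}, and reuse the factorization and Ishii's criterion from Lemma~\ref{lem:isom_for_type6} to get $r^2-2=r'^2-2$. The one flaw is in your optional sanity check, which the proof does not need: if $\zeta^8=-1$ then $\zeta^4=\pm\sqrt{-1}\neq -1$, so your map $(\zeta^2X:\zeta Y:Z)$ yields $\zeta^4 rX^3Z^2$ rather than $-rX^3Z^2$; the map that works is $(X:Y:Z)\mapsto(\zeta^4X:\zeta Y:Z)$, which turns $X^5+Y^4Z+XZ^4+rX^3Z^2$ into $\zeta^4(X^5+Y^4Z+XZ^4-rX^3Z^2)$.
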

\begin{proof}
We see that $C$ (resp. $C'$) is isomorphic to the curve defined by $Y^4Z = F(X,Z)$ (resp. $Y^4Z = G(X,Z)$) where\vspace{-1mm}
\[
    F(X,Z) \coloneqq X^5+rX^3Z^2+XZ^4 \ \text{ and }\ G(X,Z) \coloneqq X^5+r'X^3Z^2+XZ^4\vspace{0.9mm}
\]
are binary forms of degree $5$ in $K[X,Z]$.
Hence, it follows from Proposition \ref{prop:isomorphic} that $C$ and $C'$ are isomorphic to each other if and only if $F(X,Z)$ and $G(X,Z)$ are equivalent under ${\rm GL}_2(K)$.
The remainder of the proof is exactly the same as that of Lemma \ref{lem:isom_for_type6}.
\end{proof}

By the above lemma, one can show, similarly to Corollary \ref{cor:rational-type6}, that if the curve $C$ in \eqref{eq:type8} is superspecial, then $r^2 \in \mathbb{F}_{p^2}$.
On the other hand, we can prove the following stronger statement:\vspace{-1.4mm}
\begin{Prop}\label{prop:rational_for_type8}
If the plane quintic curve $C$ in \eqref{eq:type8} is superspecial, then $r+2$ and $r-2$ are both squares in $\mathbb{F}_{p^2}$\hspace{-0.2mm}. In particular, $r$ belongs to $\mathbb{F}_{p^2}$.\vspace{-1mm}
\end{Prop}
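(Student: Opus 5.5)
The plan is to pass from $C$ to a genus-$2$ quotient and then to two elliptic quotients, and to read off the arithmetic of $r\pm 2$ from the supersingularity of those elliptic curves. In affine coordinates, \eqref{eq:type8} becomes $y^4 = x^5+rx^3+x$. The involution $y\mapsto -y$ of $C$ gives a morphism $C\to H$ with $H: w^2 = x(x^4+rx^2+1)$, $w=y^2$, a curve of genus $2$ (smooth since $r\neq\pm 2$). By \cite[Corollary 2.8]{KNT}, as in the Remark of Section~\ref{subsec:type6}, superspeciality of $C$ forces $H$ to be superspecial.

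Next I use the extra involutions $(x,w)\mapsto(1/x,\pm w/x^3)$ of $H$. Put $u = x+1/x$, so that $w^2 = x^3(u^2+r-2)$. For the involution with sign $+$, the invariant $v_+ \coloneqq w(x+1)/x^2$ satisfies
\[
  v_+^2 = (u+2)(u^2+r-2),
\]
and for the sign $-$ the invariant $v_-\coloneqq w(x-1)/x^2$ gives $v_-^2=(u-2)(u^2+r-2)$. Thus $H$ maps to the elliptic curves $E_\pm: v^2=(u\pm 2)(u^2-s^2)$, where $s^2 = 2-r$ (so $s\neq 0,\pm 2$). Since $H$ is superspecial, both $E_\pm$ are supersingular; in fact I will only need $E_+$.

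The core of the argument is the following arithmetic fact: if $E: v^2=(u-e_1)(u-e_2)(u-e_3)$ is supersingular, then every cross ratio $\mu=(e_i-e_k)/(e_j-e_k)$ lies in $\mathbb{F}_{p^2}$ and is a square there. I expect this to be the main obstacle, and I would prove it as follows. The Legendre parameter $\mu$ is a root of the Deuring polynomial, whose roots lie in $\mathbb{F}_{p^2}$. Choose the model $E_\mu: v^2=u(u-1)(u-\mu)$ over $\mathbb{F}_{p^2}$, and replace it by a quadratic twist if necessary so that the $p^2$-Frobenius acts as $-p$. (This uses that a supersingular curve with $j\in\mathbb{F}_{p^2}$ has a model with $\pi=\pm p$, and the twist flips the sign.) Then $E_\mu(\mathbb{F}_{p^2})\supseteq E[p+1]\supseteq E[4]$. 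By the standard halving criterion, a $2$-torsion point $(e_k,0)$ lies in $2E(\mathbb{F}_{p^2})$ if and only if $e_i-e_k$ and $e_j-e_k$ are squares. Twisting by $d$ multiplies every difference by $d$, so all ratios of differences are squares in $\mathbb{F}_{p^2}$, independently of the chosen twist.

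I then apply this to $E_+$, with roots $-2,\,s,\,-s$. Taking the base root $s$, the ratio $(s+2)/(2s)$ lies in $\mathbb{F}_{p^2}$, and hence so does $s$; therefore $r-2=-s^2$ is a square in $\mathbb{F}_{p^2}$, because $-1$ is always a square in $\mathbb{F}_{p^2}$. Taking the base root $-2$, put $\mu=(s-2)/(s+2)$, which is a square in $\mathbb{F}_{p^2}$. Solving gives $s=2(1+\mu)/(1-\mu)$, and a short computation yields
\[
  r+2 = 4-s^2 = \frac{-16\mu}{(1-\mu)^2},
\]
which is again a square in $\mathbb{F}_{p^2}$. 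Finally, $r=(r+2)-2\in\mathbb{F}_{p^2}$, which proves the proposition.
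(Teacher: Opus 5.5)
Like the paper, you pass to the genus-$2$ quotient $H$ by $y\mapsto -y$ and use \cite[Corollary 2.8]{KNT}. The routes diverge after that. The paper normalizes $C$ as $y^4=x(x^2-1)(x^2-\lambda)$ with $\lambda=t^{16}$ and $t^{16}+rt^8+1=0$. It then cites a genus-$2$ result, \cite[Main Theorem A(1)]{Ohashi} on Rosenhain forms of superspecial curves, to get that $t^8$ is a square in $\mathbb{F}_{p^2}$. Hence $t^4\in\mathbb{F}_{p^2}$ and $r\pm 2=-(t^4\mp t^{-4})^2$.

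You instead use the involutions $x\mapsto 1/x$ of $H$ to descend to the elliptic quotient $E_+$. You then prove the genus-$1$ analogue of that citation (Legendre parameters of supersingular curves are squares in $\mathbb{F}_{p^2}$) from classical facts: Deuring's theorem, the Frobenius of a supersingular curve over $\mathbb{F}_{p^2}$, and the halving criterion. Your argument is self-contained, avoids the auxiliary parameter $t$, and shows that one elliptic factor already forces both $r+2$ and $r-2$ to be squares. The paper's argument is shorter but rests entirely on the external genus-$2$ theorem. The conclusions are equivalent: $r\pm 2$ being squares gives $t^4\pm t^{-4}\in\mathbb{F}_{p^2}$, hence $t^4\in\mathbb{F}_{p^2}$.

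One step needs repair. You twist so that $\pi=-p$ and then assert $E[p+1]\supseteq E[4]$, but this holds only for $p\equiv 3\pmod 4$. For $p\equiv 1\pmod 4$ one has $E[p+1]\cap E[4]=E[2]$, and in that twist no nonzero $2$-torsion point is halvable. Choose instead the twist with $\pi=\epsilon p$, where $\epsilon\equiv p\pmod 4$, so that $E(\mathbb{F}_{p^2})=E[p-\epsilon]\supseteq E[4]$.

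That $E_\mu$ itself has $\pi=\pm p$ is immediate. Its $2$-torsion is rational, so $4\mid p^2+1-t$ with $t\in\{0,\pm p,\pm 2p\}$, which forces $t=\pm 2p$. This also handles $j=0,1728$, where not every twist is quadratic.

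Finally, two sign discrepancies, both harmless since $-1$ is a square in $\mathbb{F}_{p^2}$. The cross ratio with base root $-2$ is $(2-s)/(2+s)=-\mu$, not $\mu$. The halving criterion is normally stated with $e_k-e_i$ rather than $e_i-e_k$.
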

\begin{proof}
We fix a root $t \neq 0$ of the equation $t^{16}+rt^8+1 = 0$.
Then, the curve $C$ in \eqref{eq:type8} is isomorphic to the superelliptic curve\vspace{-1.1mm}
\begin{equation}\label{eq:Clambda_for_type8}
    C_\lambda: y^4 = x(x^2-1)(x^2-\lambda) \ \text{ with }\, \lambda \coloneqq t^{16}\vspace{1.2mm}
\end{equation}
via the map $(X:Y:Z) = (-tx:y:t^5)$.
This curve admits the involution $(x,y) \mapsto\hspace{-0.3mm} (x,-y)$, whose quotient curve is the genus-2 curve\vspace{-1mm}
\begin{equation}\label{eq:H}
    H: v^2 = u(u^2-1)(u^2-\lambda)\vspace{1mm}
\end{equation}
where $u \coloneqq x,\,v \coloneqq y^2$.
It follows from \cite[Corollary 2.8]{KNT} that, if $C$ is superspecial, then $H$ is also superspecial.
By \cite[Main \hspace{-0.1mm}Theorem \hspace{-0.2mm}A(1)]{Ohashi}, the superspeciality of $H$ implies that $\hspace{-0.2mm}\sqrt{\lambda} = t^8$ is a square in $\mathbb{F}_{p^2}\hspace{-0.2mm}$, which means that $t^4$ belongs to $\mathbb{F}_{p^2}$.
Therefore, the values\vspace{-1.3mm}
\[
    r \pm 2 = -(t^8 \mp 2 + t^{-8}) = -(t^4 \mp t^{-4})^2
\]
are squares in $\mathbb{F}_{p^2}$, since $-1$ is a square in $\mathbb{F}_{p^2}$.
\end{proof}

In the following, instead of working with the plane quintic curve $C$ in \eqref{eq:type8}, we consider the superelliptic curve $C_\lambda$, which was constructed in the proof of Proposition \ref{prop:rational_for_type8}.
Since $r = -(t^8+t^{-8})$ and $\lambda = t^{16}$, we have the important relation\vspace{-0.8mm}
\begin{equation}\label{eq:lambda_for_type8}
    \lambda+\frac{1}{\lambda} = r^2\hspace{-0.3mm}-2.
\end{equation}
Note that $\lambda \neq 0$ since $t \neq 0$, and that $\lambda \neq 1$ follows from the assumption $r \neq \pm 2$.
The following proposition characterizes the superspeciality of $C_\lambda$ in terms of Gaussian hypergeometric series:\vspace{-1.3mm}
\begin{Prop}\label{prop:gauss_for_type8}
For the superelliptic curve $C_\lambda$ in \eqref{eq:Clambda_for_type8}, the following statements hold:\vspace{-1.5mm}
\begin{enumerate}
\item[(1)] If $p \equiv 1 \pmod{8}$, then $C_\lambda$ is superspecial if and only if\vspace{-1.3mm}
\begin{align*}
    G^{((p-1)/4)}\scalebox{0.95}{$\displaystyle\left(\frac{1}{2},\frac{1}{4},\frac{3}{4}\ \middle|\ \lambda\right)$} &= G^{((p-1)/8)}\scalebox{0.95}{$\displaystyle\left(\frac{1}{4},\frac{1}{8},\frac{7}{8}\ \middle|\ \lambda\right)$} = G^{((p-1)/8)}\scalebox{0.95}{$\displaystyle\left(\frac{3}{4},\frac{1}{8},\frac{3}{8}\ \middle|\ \lambda\right)$}\\[-0.2mm]
    = G^{((3p-3)/8)}\scalebox{0.95}{$\displaystyle\left(\frac{3}{4},\frac{3}{8},\frac{5}{8}\ \middle|\ \lambda\right)$} &= G^{((p-9)/8)}\scalebox{0.95}{$\displaystyle\left(\frac{3}{4},\frac{9}{8},\frac{11}{8}\ \middle|\ \lambda\right)$} = G^{((3p-11)/8)}\scalebox{0.95}{$\displaystyle\left(\frac{3}{4},\frac{11}{8},\frac{13}{8}\ \middle|\ \lambda\right)$} = 0.
\end{align*}
\item[(2)] If $p \equiv 3 \pmod{8}$, then $C_\lambda$ is superspecial if and only if\vspace{-1mm}
\begin{align*}
    G^{((p-3)/4)}\scalebox{0.95}{$\displaystyle\left(\frac{1}{2},\frac{3}{4},\frac{5}{4}\ \middle|\ \lambda\right)$} &= G^{((3p-1)/8)}\scalebox{0.95}{$\displaystyle\left(\frac{1}{4},\frac{1}{8},\frac{7}{8}\ \middle|\ \lambda\right)$} = G^{((p-3)/8)}\scalebox{0.95}{$\displaystyle\left(\frac{1}{4},\frac{3}{8},\frac{9}{8}\ \middle|\ \lambda\right)$}\\[-0.2mm]
    = G^{((p-3)/8)}\scalebox{0.95}{$\displaystyle\left(\frac{3}{4},\frac{3}{8},\frac{5}{8}\ \middle|\ \lambda\right)$} &= G^{((p-11)/8)}\scalebox{0.95}{$\displaystyle\left(\frac{3}{4},\frac{11}{8},\frac{13}{8}\ \middle|\ \lambda\right)$} = 0.\\[-7.5mm]
\end{align*}
\item[(3)] If $p \equiv 5 \pmod{8}$, then $C_\lambda$ is superspecial if and only if\vspace{-1mm}
\begin{align*}
    G^{((p-1)/4)}\scalebox{0.95}{$\displaystyle\left(\frac{1}{2},\frac{1}{4},\frac{3}{4}\ \middle|\ \lambda\right)$} &= G^{((5p-1)/8)}\scalebox{0.95}{$\displaystyle\left(\frac{3}{4},\frac{1}{8},\frac{3}{8}\ \middle|\ \lambda\right)$} = G^{((p-5)/8)}\scalebox{0.95}{$\displaystyle\left(\frac{3}{4},\frac{5}{8},\frac{7}{8}\ \middle|\ \lambda\right)$}\\[-0.2mm]
    = G^{((3p-7)/8)}\scalebox{0.95}{$\displaystyle\left(\frac{3}{4},\frac{7}{8},\frac{9}{8}\ \middle|\ \lambda\right)$} &= G^{((5p-9)/8)}\scalebox{0.95}{$\displaystyle\left(\frac{3}{4},\frac{9}{8},\frac{11}{8}\ \middle|\ \lambda\right)$} = 0.\\[-7.5mm]
\end{align*}
\item[(4)] If $p \equiv 7 \pmod{8}$, then $C_\lambda$ is superspecial if and only if\vspace{-1mm}
\[
    G^{((p-3)/4)}\scalebox{0.95}{$\displaystyle\left(\frac{1}{2},\frac{3}{4},\frac{5}{4}\ \middle|\ \lambda\right)$} = G^{((p-7)/8)}\scalebox{0.95}{$\displaystyle\left(\frac{3}{4},\frac{7}{8},\frac{9}{8}\ \middle|\ \lambda\right)$} = 0.\vspace{-2.3mm}
\]
\end{enumerate}
\end{Prop}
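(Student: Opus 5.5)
The plan is to apply Theorem~\ref{thm:Clambda} with $(n,r)=(4,2)$, exactly as in the proof of Proposition~\ref{prop:gauss_for_type6}. In this setting $m=2r+1=5$ and $n=4$.

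First, for each residue class of $p \bmod 8$, I will enumerate the set $T$ in \eqref{eq:T}. The constraints are:
\begin{itemize}
\item $1\le i,j\le 3$ with $4\mid(ip-j)$, so for each $i$ the value $j\equiv ip \pmod 4$ is forced;
\item $1\le h<5i/4$ and $1\le k<5j/4$, so $h\le i$ and $k\le j$ (with $h,k\le 1$ when $i=1$ or $j=1$);
\item $2\mid\bigl(hp-k-\frac{ip-j}{4}\bigr)$ and $0\le hp-k-\frac{ip-j}{4}\le 4\cdot\frac{ip-j}{4}$.
\end{itemize}
The parity condition depends on $p \bmod 8$, because $(ip-j)/4 \bmod 2$ is determined by $ip \bmod 8$. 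This is why the statement splits into the four classes $p\equiv 1,3,5,7 \pmod 8$.

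For each class I will list the finitely many admissible tuples $(i,j,h,k)$. For each tuple I will compute $a=(ip-j)/4$ and $b=(hp-k-a)/2$, decide whether $b\le a$ or $b>a$, and read off from Theorem~\ref{thm:Clambda} the truncated series:
\begin{itemize}
\item if $b\le a$, the series $G^{(b)}\bigl(\tfrac j4,\tfrac k2-\tfrac j8,1-\tfrac j4+\tfrac k2-\tfrac j8\mid\lambda\bigr)$;
\item if $b>a$, the series $G^{(2a-b)}\bigl(\tfrac j4,-\tfrac k2+\tfrac{j}{2}+\tfrac j8,1+\tfrac j4-\tfrac k2+\tfrac j8\mid\lambda\bigr)$.
\end{itemize}
Parameters that are only defined modulo $p$ will be normalized to representatives in $(0,2)$ by adding integers where needed; in the $b>a$ case this means replacing $\frac j4$ by the equivalent value dictated by the paper's convention. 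As a check, the tuple $(1,1,1,1)$ for $p\equiv 1 \pmod 8$ gives $a=(p-1)/4$ and $b=(3p-3)/8$, hence $2a-b=(p-1)/8$, with parameters $\bigl(\tfrac14,\tfrac18,\tfrac78\bigr)$. This matches one of the listed series.

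Second, if the number of tuples exceeds the number of listed conditions, I will show the extra conditions are redundant, as in the $p\equiv4 \pmod 5$ case of Proposition~\ref{prop:gauss_for_type6}. The tool is Euler's transformation in truncated form:
\[
G^{(d)}(a,b,c\mid\lambda)\equiv(1-\lambda)^{e}\,G^{(d')}(c-a,c-b,c\mid\lambda),
\]
where $e$ and $d'$ are the appropriate integers congruent mod $p$ to $c-a-b$ and to the truncation degree. Since $\lambda\ne1$, the factor $(1-\lambda)^{e}$ is nonzero, so two conditions related this way vanish simultaneously. Redundancies of this kind should explain, for instance, why $p\equiv7 \pmod 8$ needs only two series: the tuples with $i=j$ should pair up via Euler's transformation with tuples of the form $(3,1,\ast,\ast)$ or $(1,3,\ast,\ast)$. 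I will not try to reduce the lists further than the statement does.

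I expect the main obstacle to be the bookkeeping of the second step. One must enumerate $T$ correctly in each class, paying attention to the boundary inequalities, and match each resulting series, possibly after Euler's transformation, to exactly one listed series with the exact truncation degree. I would first tabulate $T$ for each class, checking numerically with a representative prime such as $p=17,19,29,23$. I would then verify the Euler-transformation identities by comparing coefficients, using that $d'$ is determined by the vanishing of the Pochhammer symbols $(c-a\,;\ell)$ or $(c-b\,;\ell)$ modulo $p$.
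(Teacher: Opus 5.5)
Your plan is exactly the paper's proof: apply Theorem~\ref{thm:Clambda} with $(n,r)=(4,2)$, tabulate $T$ in each class of $p \bmod 8$, and discard redundant conditions. Redundancy arises either because two tuples give literally the same series (e.g.\ $(2,2,1,1)$ and $(2,2,2,2)$ when $p\equiv 1,5 \pmod 8$), or via Euler's transformation, which is needed only for $(3,3,2,2)$ versus $(1,1,1,1)$ when $p\equiv 1 \pmod 8$ and for $(3,1,2,1)$ versus $(1,3,1,2)$ when $p\equiv 7 \pmod 8$. So your guess about the pairing for $p\equiv 7 \pmod 8$ is slightly off: there the two tuples $(2,2,1,2)$ and $(2,2,2,1)$ simply give the same series. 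Also, no normalization of parameters is ever needed, since the formulas of Theorem~\ref{thm:Clambda} already produce the listed values in $(0,2)$ --- fortunately, because shifting a parameter by an integer would change the series over $\mathbb{F}_p$.
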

\begin{proof}
(1) If $p \equiv 1 \pmod{8}$, then we see that the set $T$ in \eqref{eq:T} is given by\vspace{-1mm}
\[
    T = \{(1,1,1,1),(2,2,1,1),(2,2,2,2),(3,3,1,1),(3,3,1,3),(3,3,2,2),(3,3,3,1),(3,3,3,3)\}.
\]
Therefore, it follows from Theorem~\ref{thm:Clambda} that $C_\lambda$ is superspecial if and only if\vspace{-1.1mm}
\begin{align*}
    G^{((p-1)/8)}\scalebox{0.95}{$\displaystyle\left(\frac{1}{4},\frac{1}{8},\frac{7}{8}\ \middle|\ \lambda\right)$} &= G^{((p-1)/4)}\scalebox{0.95}{$\displaystyle\left(\frac{1}{2},\frac{1}{4},\frac{3}{4}\ \middle|\ \lambda\right)$}\\[-0.2mm]
    = G^{((p-1)/8)}\scalebox{0.95}{$\displaystyle\left(\frac{3}{4},\frac{1}{8},\frac{3}{8}\ \middle|\ \lambda\right)$} &= G^{((p-9)/8)}\scalebox{0.95}{$\displaystyle\left(\frac{3}{4},\frac{9}{8},\frac{11}{8}\ \middle|\ \lambda\right)$} = G^{((5p-5)/8)}\scalebox{0.95}{$\displaystyle\left(\frac{3}{4},\frac{5}{8},\frac{7}{8}\ \middle|\ \lambda\right)$}\\[-0.2mm]
    = G^{((3p-11)/8)}\scalebox{0.95}{$\displaystyle\left(\frac{3}{4},\frac{11}{8},\frac{13}{8}\ \middle|\ \lambda\right)$} &= G^{((3p-3)/8)}\scalebox{0.95}{$\displaystyle\left(\frac{3}{4},\frac{3}{8},\frac{5}{8}\ \middle|\ \lambda\right)$} = 0.\\[-6.5mm]
\end{align*}
Note that both $(2,2,1,1)$ and $(2,2,2,2)$ in $T$ determine the second polynomial of the above equalities.
Also, a straightforward computation with Euler’s transformation formula yields\vspace{-1.3mm}
\[
    G^{((5p-5)/8)}\scalebox{0.95}{$\displaystyle\left(\frac{3}{4},\frac{5}{8},\frac{7}{8}\ \middle|\ \lambda\right)$} = (1-\lambda)^{(p-1)/2}G^{((p-1)/8)}\scalebox{0.95}{$\displaystyle\left(\frac{1}{4},\frac{1}{8},\frac{7}{8}\ \middle|\ \lambda\right)$},\vspace{-1.1mm}
\]
and hence, the first and fifth polynomial of the above equalities have the same roots since $\lambda \neq 1$.\par
\hspace{5.8mm}(2) If $p \equiv 3 \pmod{8}$, then we see that the set $T$ in \eqref{eq:T} is given by\vspace{-1.2mm}
\[
    T = \{(1,3,1,1),(1,3,1,3),(2,2,1,2),(2,2,2,1),(3,1,1,1),(3,1,3,1)\}.\vspace{-0.1mm}
\]
Therefore, it follows from Theorem~\ref{thm:Clambda} that $C_\lambda$ is superspecial if and only if\vspace{-1.1mm}
\begin{align*}
    G^{((p-11)/8)}\scalebox{0.95}{$\displaystyle\left(\frac{3}{4},\frac{11}{8},\frac{13}{8}\ \middle|\ \lambda\right)$} &= G^{((p-3)/8)}\scalebox{0.95}{$\displaystyle\left(\frac{3}{4},\frac{3}{8},\frac{5}{8}\ \middle|\ \lambda\right)$} = G^{((p-3)/4)}\scalebox{0.95}{$\displaystyle\left(\frac{1}{2},\frac{3}{4},\frac{5}{4}\ \middle|\ \lambda\right)$}\\[-0.2mm]
    = G^{((p-3)/8)}\scalebox{0.95}{$\displaystyle\left(\frac{1}{4},\frac{3}{8},\frac{9}{8}\ \middle|\ \lambda\right)$} &= G^{((3p-1)/8)}\scalebox{0.95}{$\displaystyle\left(\frac{1}{4},\frac{1}{8},\frac{7}{8}\ \middle|\ \lambda\right)$} = 0.\\[-6.4mm]
\end{align*}
Note that both $(2,2,1,2)$ and $(2,2,2,1)$ in $T$ determine the third polynomial of the above equalities.\par
\hspace{5.8mm}(3) If $p \equiv 5 \pmod{8}$, then we see that the set $T$ in \eqref{eq:T} is given by\vspace{-1.3mm}
\[
    T = \{(2,2,1,1),(2,2,2,2),(3,3,1,2),(3,3,2,1),(3,3,2,3),(3,3,3,2)\}.\vspace{-0.2mm}
\]
Therefore, it follows from Theorem~\ref{thm:Clambda} that $C_\lambda$ is superspecial if and only if\vspace{-1.1mm}
\begin{align*}
    G^{((p-1)/4)}\scalebox{0.95}{$\displaystyle\left(\frac{1}{2},\frac{1}{4},\frac{3}{4}\ \middle|\ \lambda\right)$} &= G^{((p-5)/8)}\scalebox{0.95}{$\displaystyle\left(\frac{3}{4},\frac{5}{8},\frac{7}{8}\ \middle|\ \lambda\right)$} = G^{((5p-1)/8)}\scalebox{0.95}{$\displaystyle\left(\frac{3}{4},\frac{1}{8},\frac{3}{8}\ \middle|\ \lambda\right)$}\\[-0.2mm]
    = G^{((5p-9)/8)}\scalebox{0.95}{$\displaystyle\left(\frac{3}{4},\frac{9}{8},\frac{11}{8}\ \middle|\ \lambda\right)$} &= G^{((3p-7)/8)}\scalebox{0.95}{$\displaystyle\left(\frac{3}{4},\frac{7}{8},\frac{9}{8}\ \middle|\ \lambda\right)$} = 0.\\[-6.4mm]
\end{align*}
Note that both $(2,2,1,1)$ and $(2,2,2,2)$ in $T$ determine the first polynomial of the above equalities.\newpage\par
\hspace{5.8mm}(4) If $p \equiv 7 \pmod{8}$, then we see that the set $T$ in \eqref{eq:T} is given by\vspace{-1.3mm}
\[
    T = \{(1,3,1,2),(2,2,1,2),(2,2,2,1),(3,1,2,1)\}.\vspace{-0.1mm}
\]
Therefore, it follows from Theorem~\ref{thm:Clambda} that $C_\lambda$ is superspecial if and only if\vspace{-1.4mm}
\[
    G^{((p-7)/8)}\scalebox{0.95}{$\displaystyle\left(\frac{3}{4},\frac{7}{8},\frac{9}{8}\ \middle|\ \lambda\right)$} = G^{((p-3)/4)}\scalebox{0.95}{$\displaystyle\left(\frac{1}{2},\frac{3}{4},\frac{5}{4}\ \middle|\ \lambda\right)$} = G^{((5p-3)/8)}\scalebox{0.95}{$\displaystyle\left(\frac{1}{4},\frac{3}{8},\frac{9}{8}\ \middle|\ \lambda\right)$}.\vspace{-1mm}
\]
Note that both $(2,2,1,2)$ and $(2,2,2,1)$ in $T$ determine the second polynomial of the above equalities.
Also, a straightforward computation with Euler's transformation formula yields\vspace{-1.3mm}
\[
    G^{((5p-3)/8)}\scalebox{0.95}{$\displaystyle\left(\frac{1}{4},\frac{3}{8},\frac{9}{8}\ \middle|\ \lambda\right)$} = (1-\lambda)^{(p+1)/2}G^{((p-7)/8)}\scalebox{0.95}{$\displaystyle\left(\frac{3}{4},\frac{7}{8},\frac{9}{8}\ \middle|\ \lambda\right)$},\vspace{-1.1mm}
\]
and hence, the first and third polynomial of the above equalities have the same roots since $\lambda \neq 1$.\vspace{-1.8mm}
\end{proof}
\begin{Rmk}
Applying Proposition \ref{prop:gauss_for_type8} for the case $(n,r) = (2,2)$, we see that the hyperelliptic curve $H$ in \eqref{eq:H} is superspecial if and only if $h(\lambda)=0$, where\vspace{-1mm}
\begin{equation}\label{eq:h}
    h(\lambda) \coloneqq \left\{
        \begin{array}{ll}
            G^{((p-1)/4)}(1/2,1/4,3/4 \mid \lambda) & \text{if }\, p \equiv 1 \!\!\pmod{4},\\
            G^{((p-3)/4)}(1/2,3/4,5/4 \mid \lambda) & \text{if }\, p \equiv 3 \!\!\pmod{4}.
        \end{array}
    \right.\vspace{-0.7mm}
\end{equation}
This coincides with the condition that the first polynomial vanishes in Proposition \ref{prop:gauss_for_type8} for each case.
\end{Rmk}

Let $G(\lambda)$ be the gcd of polynomials in Proposition \ref{prop:gauss_for_type8} for each case. For example, we define\vspace{-0.8mm}
\[
    G(\lambda) \coloneqq {\rm gcd}\biggl(G^{((p-3)/4)}\scalebox{0.95}{$\displaystyle\left(\frac{1}{2},\frac{3}{4},\frac{5}{4}\ \middle|\ \lambda\right)$},\,G^{((p-7)/8)}\scalebox{0.95}{$\displaystyle\left(\frac{3}{4},\frac{7}{8},\frac{9}{8}\ \middle|\ \lambda\right)$}\biggr)\vspace{-0.2mm}
\]
if $p \equiv 7 \pmod{8}$.
This polynomial is separable and has no roots equal to $0$ or $1$, since $h(\lambda)$ defined in \eqref{eq:h} is separable and has no roots equal to $0$ or $1$ by \cite[Proposition 1.14]{IKO}.
Hence, we have the following theorem as an analogy of Theorem \ref{thm:main-10}.\vspace{-1.4mm}
\begin{Thm}\label{prethm:main-8}
Let $G(\lambda)$ be the gcd of polynomials in Proposition \ref{prop:gauss_for_type8} for each case.
Then, the number of isomorphism classes of superspecial plane quintic curves with cyclic automorphism group of order $8$ equals\vspace{-0.9mm}
\[
    \left\{ 
        \begin{array}{ll}
            \hspace{-0.3mm}\frac{1}{2}(\deg{G(\lambda)}-1) & \text{ if } p \equiv 15 \!\!\pmod{16},\\[0.5mm]
            \hspace{-0.3mm}\frac{1}{2}\hspace{-0.3mm}\deg{G(\lambda)} & \text{ otherwise}
        \end{array}
    \right.\vspace{-1mm}
\]
in characteristic $p > 13$.\vspace{-1.1mm}
\end{Thm}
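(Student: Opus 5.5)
The plan mirrors the proof of Theorem \ref{thm:main-10}. Every plane quintic whose automorphism group contains $\mathbb{Z}/8\mathbb{Z}$ is of the form \eqref{eq:type8}. The proof of Proposition \ref{prop:rational_for_type8} shows that such a curve is isomorphic to the superelliptic curve $C_\lambda$ in \eqref{eq:Clambda_for_type8} for some $\lambda \neq 0,1$. Conversely, every $\lambda \neq 0,1$ arises: pick $t$ with $t^{16}=\lambda$ and set $r = -(t^8+t^{-8})$; then $r \neq \pm 2$ because $\lambda \neq 1$.

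By Proposition \ref{prop:gauss_for_type8}, for $\lambda \neq 0,1$ the curve $C_\lambda$ is superspecial if and only if $\lambda$ is a common root of the listed polynomials, that is, a root of $G(\lambda)$. The identification of roots uses the Euler transformation factors $(1-\lambda)^{\ast}$, which is harmless because $\lambda \neq 1$. As noted just before the statement, $G(\lambda)$ divides the polynomial $h(\lambda)$ of \eqref{eq:h}. Since $h(\lambda)$ is separable and does not vanish at $0$ or $1$ by \cite[Proposition 1.14]{IKO}, the same holds for $G(\lambda)$. Hence the number of $\lambda \neq 0,1$ giving a superspecial $C_\lambda$ is exactly $\deg G(\lambda)$.

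Next we count isomorphism classes. By Lemma \ref{lem:isom_for_type8} and relation \eqref{eq:lambda_for_type8}, $C_\lambda \cong C_{\lambda'}$ if and only if $\lambda+1/\lambda = \lambda'+1/\lambda'$, i.e.\ $\lambda' \in \{\lambda, 1/\lambda\}$. So classes correspond to orbits of the involution $\lambda \mapsto 1/\lambda$ on the roots of $G$. We check that this involution preserves the root set: $C_\lambda \cong C_{1/\lambda}$, and superspeciality is an isomorphism invariant. Its fixed points are $\lambda = \pm 1$, and only $\lambda=-1$ is relevant, corresponding to $r^2 = 0$, i.e.\ $r=0$.

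We then remove the curves whose automorphism group is strictly larger than $\mathbb{Z}/8\mathbb{Z}$. By the description after \eqref{eq:type8}, the only exception is $r=0$, i.e.\ $\lambda=-1$, which gives the Type~\textit{5} curve with group $\mathbb{Z}/16\mathbb{Z}$. By Proposition \ref{prop:type3-5}(3), this curve is superspecial exactly when $p \equiv 15 \pmod{16}$. In that case $-1$ is a root of $G$ and is the unique fixed point of $\lambda \mapsto 1/\lambda$. Removing it leaves $\deg G - 1$ roots, all different from $\pm 1$ and paired off freely, giving $\frac{1}{2}(\deg G - 1)$ classes. Otherwise $-1$ is not a root, the involution acts freely on the $\deg G$ roots, and the count is $\frac{1}{2}\deg G$.

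The main point needing care is the bookkeeping in the last two steps. We must make sure that for $p>13$ the Badr--Bars classification leaves no other enlargement of the automorphism group within this family, and that $\lambda=-1$ is a root of $G$ exactly when $p \equiv 15 \pmod{16}$. We take both facts from the cited classification and from Proposition \ref{prop:type3-5}(3) rather than verifying them directly on $G$.
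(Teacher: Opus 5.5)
Your proposal is correct and follows essentially the same route as the paper's proof. Like the paper, you parametrize the curves by $C_\lambda$, identify the superspecial members with the roots of $G(\lambda)$ (separable and avoiding $0,1$ because $G$ divides $h$), discard $\lambda=-1$ exactly when $p \equiv 15 \pmod{16}$ via Proposition~\ref{prop:type3-5}(3), and halve the count using $C_\lambda \cong C_{1/\lambda}$. The only difference is that you spell out two points the paper leaves implicit: every $\lambda \neq 0,1$ arises from some admissible $r$, and $\lambda \mapsto 1/\lambda$ preserves the root set with $\lambda=-1$ as its only relevant fixed point.
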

\begin{proof}
Recall that every plane quintic curve with an automorphism group containing a subgroup isomorphic to $\mathbb{Z}/8\mathbb{Z}$ is isomorphic to $C_\lambda$ in \eqref{eq:Clambda_for_type8} for some $\lambda \neq 0,1$.
By the definition of $G(\lambda)$, the curve $C_\lambda$ is superspecial if and only if $G(\lambda)=0$.
Since $G(\lambda)$ is separable and has no roots equal to $0$ or $1$, the number of $\lambda \neq 0,1$ for which $C_\lambda$ is superspecial and ${\rm Aut}(C_\lambda) \cong \mathbb{Z}/8\mathbb{Z}$ is given by\vspace{-1mm}
\begin{equation}\label{eq:number_of_type8}
    \left\{
        \begin{array}{ll}
            \hspace{-0.3mm}\deg{G(\lambda)}-1 & \text{ if } p\equiv 15 \!\!\pmod{16},\\[-0.2mm]
            \hspace{-0.3mm}\deg{G(\lambda)} & \text{ otherwise}.
        \end{array}
    \right.\vspace{-1mm}
\end{equation}
Indeed, by the discussion at the beginning of Section \ref{subsec:type8} and the relation \eqref{eq:Clambda_for_type8},\vspace{-1.2mm}
\begin{itemize}
    \item If $\lambda = -1$, then the automorphism group of $C_\lambda$ is isomorphic to $\mathbb{Z}/16\mathbb{Z}$.
    In this case $C_\lambda$ corresponds to the plane quintic of Type \textit{5}, which is superspecial if and only if $p \equiv 15 \pmod{16}$ by Proposition \ref{prop:type3-5}(3).\vspace{-1.6mm}
    \item Otherwise, the automorphism group of $C_\lambda$ is isomorphic to $\mathbb{Z}/8\mathbb{Z}$.\vspace{-0.6mm}
\end{itemize}
By Lemma \ref{lem:isom_for_type8} and the relation \eqref{eq:Clambda_for_type8}, we have $C_\lambda \cong C_{\lambda'}$\hspace{-0.1mm} if and only if $\lambda' \in \{\lambda,1/\lambda\}$.
Therefore, the number of isomorphism classes of superspecial curves $C_\lambda$ with ${\rm Aut}(C_\lambda) \cong \mathbb{Z}/8\mathbb{Z}$ is half of \eqref{eq:number_of_type8}, as desired.
\end{proof}

Thanks to the above theorem, we can determine the number of isomorphism classes of superspecial plane quintic curves with cyclic automorphism group of order $8$ by $G(\lambda)$, which can be computed in $\widetilde{O}(p)$ operations over $\mathbb{F}_p\hspace{-0.1mm}$ by an argument similar to the proof of \cite[Theorem 3.2.9]{OK}.
Finally, we implemented our algorithm in Magma for explicitly computing the number.
We ran it on a machine equipped with an AMD EPYC 7742 CPU and 2TB of RAM in every characteristic $13 < p < 10000$.
The experimental results for $p \equiv 7 \pmod{8}$ are shown in Table \ref{tbl:main}.

\begin{table}[htbp]
\centering
\begin{minipage}{0.135\textwidth}
    \centering
    \begin{tabular}{c|c}
        $p$ & $\#$ \\ \hline
        23 & 0 \\
        31 & 1 \\
        47 & 0 \\
        71 & 1 \\
        79 & 1 \\
        103 & 1 \\
        127 & 0 \\
        151 & 1 \\
        167 & 0 \\
        191 & 3 \\
        199 & 1 \\
        223 & 1 \\
        239 & 1 \\
        263 & 0 \\
        271 & 2 \\
        311 & 2 \\
        359 & 1 \\
        367 & 0 \\
        383 & 2 \\
        431 & 1 \\
        439 & 2 \\
        463 & 0 \\
        479 & 4 \\
        487 & 1 \\
        503 & 1 \\
        599 & 2 \\
        607 & 0 \\
        631 & 1 \\
        647 & 1 \\
        719 & 3 \\
        727 & 1 \\
        743 & 1 \\
        751 & 5 \\
        823 & 0 \\
        839 & 2 \\
        863 & 5 \\
        887 & 1 \\
        911 & 4 \\
        919 & 1 \\
        967 & 3 \\
        983 & 0 \\
        991 & 5 \\
        1031 & 1 \\
        1039 & 2
    \end{tabular}
\end{minipage}\hspace{-2mm}
\begin{minipage}{0.135\textwidth}
    \centering
    \begin{tabular}{c|c}
        $p$ & $\#$ \\ \hline
        1063 & 2 \\
        1087 & 0 \\
        1103 & 1 \\
        1151 & 6 \\
        1223 & 3 \\
        1231 & 5 \\
        1279 & 1 \\
        1303 & 0 \\
        1319 & 1 \\
        1327 & 2 \\
        1367 & 1 \\
        1399 & 1 \\
        1423 & 1 \\
        1439 & 1 \\
        1447 & 0 \\
        1471 & 3 \\
        1487 & 3 \\
        1511 & 1 \\
        1543 & 3 \\
        1559 & 1 \\
        1567 & 0 \\
        1583 & 2 \\
        1607 & 0 \\
        1663 & 2 \\
        1759 & 3 \\
        1783 & 0 \\
        1823 & 4 \\
        1831 & 2 \\
        1847 & 3 \\
        1871 & 5 \\
        1879 & 1 \\
        1951 & 4 \\
        1999 & 5 \\
        2039 & 2 \\
        2063 & 1 \\
        2087 & 1 \\
        2111 & 2 \\
        2143 & 5 \\
        2207 & 2 \\
        2239 & 2 \\
        2287 & 5 \\
        2311 & 1 \\
        2351 & 3 \\
        2383 & 4
    \end{tabular}
\end{minipage}\hspace{-2mm}
\begin{minipage}{0.135\textwidth}
    \centering
    \begin{tabular}{c|c}
        $p$ & $\#$ \\ \hline
        2399 & 3 \\
        2423 & 0 \\
        2447 & 2 \\
        2503 & 1 \\
        2543 & 1 \\
        2551 & 3 \\
        2591 & 2 \\
        2647 & 0 \\
        2663 & 2 \\
        2671 & 3 \\
        2687 & 1 \\
        2711 & 3 \\
        2719 & 4 \\
        2767 & 2 \\
        2791 & 2 \\
        2879 & 2 \\
        2887 & 0 \\
        2903 & 0 \\
        2927 & 0 \\
        2999 & 2 \\
        3023 & 2 \\
        3079 & 4 \\
        3119 & 3 \\
        3167 & 4 \\
        3191 & 2 \\
        3271 & 1 \\
        3319 & 2 \\
        3343 & 2 \\
        3359 & 6 \\
        3391 & 6 \\
        3407 & 5 \\
        3463 & 2 \\
        3511 & 5 \\
        3527 & 2 \\
        3559 & 1 \\
        3583 & 3 \\
        3607 & 2 \\
        3623 & 0 \\
        3631 & 2 \\
        3671 & 2 \\
        3719 & 4 \\
        3727 & 3 \\
        3767 & 1 \\
        3823 & 1
    \end{tabular}
\end{minipage}\hspace{-2mm}
\begin{minipage}{0.135\textwidth}
    \centering
    \begin{tabular}{c|c}
        $p$ & $\#$ \\ \hline
        3847 & 1 \\
        3863 & 2 \\
        3911 & 2 \\
        3919 & 2 \\
        3943 & 0 \\
        3967 & 5 \\
        4007 & 1 \\
        4079 & 6 \\
        4111 & 4 \\
        4127 & 2 \\
        4159 & 3 \\
        4231 & 1 \\
        4271 & 2 \\
        4327 & 1 \\
        4391 & 2 \\
        4423 & 2 \\
        4447 & 4 \\
        4463 & 3 \\
        4519 & 2 \\
        4567 & 1 \\
        4583 & 1 \\
        4591 & 2 \\
        4639 & 3 \\
        4663 & 1 \\
        4679 & 5 \\
        4703 & 6 \\
        4751 & 3 \\
        4759 & 3 \\
        4783 & 1 \\
        4799 & 3 \\
        4831 & 2 \\
        4871 & 2 \\
        4903 & 2 \\
        4919 & 2 \\
        4943 & 2 \\
        4951 & 4 \\
        4967 & 0 \\
        4999 & 1 \\
        5023 & 4 \\
        5039 & 1 \\
        5087 & 1 \\
        5119 & 3 \\
        5167 & 3 \\
        5231 & 7
    \end{tabular}
\end{minipage}\hspace{-2mm}
\begin{minipage}{0.135\textwidth}
    \centering
    \begin{tabular}{c|c}
        $p$ & $\#$ \\ \hline
        5279 & 6 \\
        5303 & 2 \\
        5351 & 3 \\
        5399 & 2 \\
        5407 & 4 \\
        5431 & 2 \\
        5471 & 3 \\
        5479 & 1 \\
        5503 & 0 \\
        5519 & 1 \\
        5527 & 2 \\
        5591 & 1 \\
        5623 & 1 \\
        5639 & 3 \\
        5647 & 3 \\
        5711 & 2 \\
        5743 & 1 \\
        5783 & 1 \\
        5791 & 4 \\
        5807 & 2 \\
        5839 & 4 \\
        5879 & 3 \\
        5903 & 7 \\
        5927 & 3 \\
        6007 & 0 \\
        6047 & 2 \\
        6079 & 5 \\
        6143 & 2 \\
        6151 & 2 \\
        6199 & 2 \\
        6247 & 2 \\
        6263 & 0 \\
        6271 & 4 \\
        6287 & 1 \\
        6311 & 2 \\
        6343 & 1 \\
        6359 & 4 \\
        6367 & 4 \\
        6551 & 4 \\
        6599 & 3 \\
        6607 & 0 \\
        6679 & 2 \\
        6703 & 1 \\
        6719 & 7
    \end{tabular}
\end{minipage}\hspace{-2mm}
\begin{minipage}{0.135\textwidth}
    \centering
    \begin{tabular}{c|c}
        $p$ & $\#$ \\ \hline
        6791 & 2 \\
        6823 & 3 \\
        6863 & 3 \\
        6871 & 2 \\
        6911 & 3 \\
        6959 & 4 \\
        6967 & 0 \\
        6983 & 2 \\
        6991 & 1 \\
        7039 & 2 \\
        7079 & 2 \\
        7103 & 3 \\
        7127 & 4 \\
        7151 & 3 \\
        7159 & 2 \\
        7207 & 1 \\
        7247 & 1 \\
        7351 & 1 \\
        7487 & 0 \\
        7559 & 1 \\
        7583 & 1 \\
        7591 & 2 \\
        7607 & 1 \\
        7639 & 1 \\
        7687 & 2 \\
        7703 & 0 \\
        7727 & 3 \\
        7759 & 5 \\
        7823 & 2 \\
        7879 & 4 \\
        7919 & 4 \\
        7927 & 1 \\
        7951 & 1 \\
        8039 & 4 \\
        8087 & 1 \\
        8111 & 4 \\
        8167 & 2 \\
        8191 & 4 \\
        8231 & 1 \\
        8263 & 1 \\
        8287 & 4 \\
        8311 & 3 \\
        8423 & 3 \\
        8431 & 3
    \end{tabular}
\end{minipage}\hspace{-2mm}
\begin{minipage}{0.135\textwidth}
    \centering
    \begin{tabular}{c|c}
        $p$ & $\#$ \\ \hline
        8447 & 4 \\
        8527 & 1 \\
        8543 & 2 \\
        8599 & 5 \\
        8623 & 3 \\
        8647 & 0 \\
        8663 & 1 \\
        8719 & 4 \\
        8783 & 3 \\
        8807 & 2 \\
        8831 & 1 \\
        8839 & 1 \\
        8863 & 2 \\
        8887 & 1 \\
        8951 & 3 \\
        8999 & 5 \\
        9007 & 1 \\
        9103 & 1 \\
        9127 & 0 \\
        9151 & 2 \\
        9199 & 3 \\
        9239 & 3 \\
        9311 & 5 \\
        9319 & 4 \\
        9343 & 4 \\
        9391 & 1 \\
        9431 & 4 \\
        9439 & 4 \\
        9463 & 2 \\
        9479 & 1 \\
        9511 & 2 \\
        9551 & 5 \\
        9623 & 4 \\
        9631 & 3 \\
        9679 & 6 \\
        9719 & 3 \\
        9743 & 4 \\
        9767 & 1 \\
        9791 & 5 \\
        9839 & 3 \\
        9871 & 2 \\
        9887 & 0 \\
        9967 & 3 \\
             & 
    \end{tabular}
\end{minipage}
\caption{The number $\#$ of isomorphism classes of superspecial plane quintic curves with cyclic automorphism group of order $8$ in characteristic $13 < p < 10000$ with $p \equiv 7 \pmod{8}$}\label{tbl:main}
\end{table}

\newpage 
From Table \ref{tbl:main}, we can observe that the number of isomorphism classes of superspecial plane quintic curves with automorphism groups $\cong \mathbb{Z}/8\mathbb{Z}$ seems to be irregular (recall from Theorem \ref{thm:main-10} that they are unlike those of superspecial plane quintic curves with automorphism groups $\cong \mathbb{Z}/10\mathbb{Z}$).
On the other hand, we found that there is no such a curve in the case where $p \not\equiv 7 \pmod{8}$ and $13 < p < 10000$. which completes the proof of Theorem \ref{thm:main-8}.\vspace{-1mm}

\section{Concluding remarks}\label{sec:concluding}
The results of Theorem \ref{thm:main-8} for the cases $p \equiv 1,3,5 \pmod{8}$ lead us to formulate the following expectation:\vspace{-1mm}
\begin{Ex}
If $p \not\equiv 7 \pmod{8}$, then there is no superspecial plane quintic curve with cyclic automorphism group of order $8$ in characteristic $p$.\vspace{-0.3mm}
\end{Ex}
\noindent This seems to arise from the fact that, when $p \not\equiv 7 \pmod{8}$, there are more equations that $\lambda$ must satisfy by Proposition \ref{prop:gauss_for_type8}, but a theoretical proof remains a topic for future work.
It is also an interesting problem to investigate  superspecial plane quintic curves with automorphism groups other than those in Table \ref{tbl:classification}.
In that case, we expect to apply multivariable hypergeometric series, as discussed in \cite{OH}.

\vspace{2.3mm}

\textsc{Graduate School of Information Science and Technology, The
University of Tokyo — 7-3-1 Hongo, Bunkyo-ku, Tokyo, 113-0033, Japan.}\par
\textit{E-mail address}: \url{ryo-ohashi@g.ecc.u-tokyo.ac.jp}

\vfill
\begin{thebibliography}{99}\vspace{-1mm}
\bibitem{BB} \textsc{E.\,Badr, F\hspace{-0.2mm}.\,Bars}: \textit{Automorphism groups of nonsingular plane curves of degree 5}, Communications in Algebra {\bf 44}, 4327--4340, 2016.\vspace{-1mm}
\bibitem{Brock} \textsc{B.\hspace{0.3mm}W\hspace{-0.4mm}.\,Brock}: \textit{Superspecial curves of genera two and three}, Thesis (Ph.D.)-Princeton \hspace{-0.2mm}University, 1993.\vspace{-1mm}
\bibitem{Dolgachev} \textsc{I.\hspace{0.4mm}V\hspace{-0.3mm}.\,Dolgachev}: \textit{Classical Algebraic Geometry: \hspace{-0.2mm}A \hspace{-0.2mm}Modern \hspace{-0.2mm}View}, \hspace{0.2mm}Cambridge University Press, 2012.\vspace{-1mm}
\bibitem{Ekedahl} \textsc{T.\,Ekedahl}: \textit{On supersingular curves and abelian varieties}, Math.\,Scand.\,{\bf 60}, 151--178, 1987.\vspace{-1mm}
\bibitem{Hartshorne} \textsc{R.\,Hartshorne}: \textit{Algebraic Geometry}, GTM\hspace{0.7mm}{\bf 52}, Springer--Verlag, 1977.\vspace{-1mm}
% \bibitem{Hashimoto} \textsc{K\hspace{-0.2mm}.\,Hashimoto}: \textit{Class numbers of positive definite ternary quaternion hermitian forms}, \hspace{0.2mm}Proc.\,Jpn.\,Acad., Ser.\,A \hspace{-0.3mm}{\bf 59}, 490-4403, 1983.\vspace{-1mm}
% \bibitem{Igusa} \textsc{J.\,Igusa}: \textit{Arithmetic variety of moduli for genus two}, Ann.\,Math.\hspace{0.5mm}{\bf 72}(2), 612--649, 1960.\vspace{-1mm}
\bibitem{IKO} \textsc{T\hspace{-0.2mm}.\,Ibukiyama, \hspace{-0.2mm}T\hspace{-0.2mm}.\,Katsura, F.\,Oort}: \textit{Supersingular curves of genus two and class numbers}, Compos. Math.\,{\bf 57}(2), 127--152, 1986.\vspace{-1mm}
\bibitem{Ishii} \textsc{N.\,Ishii}: \textit{The automorphism group and invariants of a curve of genus 2}, J.\hspace{0.8mm}Res.\hspace{0.8mm}Inst.\hspace{0.8mm}Sci.\hspace{0.8mm}Tech., Nihon Univ.\,{\bf 126}, 6--11, 2011.\vspace{-1mm}
\bibitem{KNT} \textsc{A\hspace{-0.2mm}.\hspace{0.6mm}Kazemifard, A.\hspace{0.5mm}R.\hspace{0.5mm}Naghipour, S.\hspace{0.3mm}Tafazolian}: \textit{A note on superspecial and maximal curves}, Bull. Iranian Math.\,Soc.\,{\bf 39}, 405--413, 2013.\vspace{-1mm}
\bibitem{KW} \textsc{T\hspace{-0.2mm}.\,Kodama, \hspace{-0.2mm}T\hspace{-0.2mm}.\hspace{0.5mm}Washio}: \textit{Hasse-Witt matrices of Fermat curves}, Manuscripta Math.\hspace{0.5mm}{\bf 60}, 185--195, 1988.\vspace{-1mm}
% \bibitem{LGRS} \textsc{D.\,Lombardo, \hspace{-0.2mm}E.\,L.\,García, \hspace{-0.2mm}C.\hspace{0.6mm}Ritzenthaler, \hspace{-0.2mm}J\hspace{-0.2mm}.\,Sijsling}: \textit{Decomposing Jacobians via Galois covers}, Exp.\,Math.\,{\bf 32}, 218--240, 2023.\vspace{-1mm}
% \bibitem{MT} \textsc{S.\,Meagher, J.\,Top}: \textit{Twists of genus three curves over finite fields}, Finite Fields \hspace{-0.1mm}Appl.\,{\bf 16}(5), 347--368, 2010.\vspace{-1mm}
\bibitem{MS} \textsc{M\hspace{-0.1mm}.\,Montanucci, P\hspace{-0.4mm}.\,Speziali}: \hspace{-0.3mm}\textit{The $a$-numbers of Fermat and Hurwitz curves}, J\hspace{-0.2mm}.\,Pure \hspace{-0.2mm}Appl.\,Algebra\,{\bf 222}, 477--488, 2018.\vspace{-1mm}
\bibitem{MK} \textsc{T\hspace{-0.3mm}.\,Moriya, M\hspace{-0.1mm}.\,Kudo}: \textit{Some explicit arithmetic on curves of genus three and their applications}, preprint, arXiv:\,2209.02926.\vspace{-1mm}
\bibitem{Ohashi} \textsc{R.\,Ohashi}: \textit{On the Rosenhain forms of superspecial curves of genus two}, Finite \hspace{-0.1mm}Fields \hspace{-0.2mm}Appl.\,{\bf 97}, 102445, 2024.\vspace{-1mm}
\bibitem{OH} \textsc{R.\,Ohashi, S.\,Harashita}: \textit{Differential forms on the curves associated to Appell-Lauricella hypergeome-\\tric series and the Cartier operator on them}, Yokohama Mathematical Journal\hspace{1mm}{\bf 69}, 1--32, 2023.\vspace{-1mm}
\bibitem{OK} \textsc{R.\,Ohashi, M\hspace{-0.2mm}.\,Kudo}: \textit{Computing superspecial hyperelliptic curves of genus 4 with automorphism group properly containing the Klein 4-group}, J.\,Comput.\,Algebra\hspace{1mm}{\bf 11}, 100020, 2024.\vspace{-1mm}
\bibitem{OKH} \textsc{R.\,Ohashi, M\hspace{-0.2mm}.\,Kudo, S.\,Harashita}: \textit{The $a$-numbers of non-hyperelliptic curves of genus 3 with cyclic automorphism group of order $6$}, Acta Arith.\,{\bf 216}(3), 227--248, 2024.\vspace{-1mm}
\bibitem{Oort} \textsc{F\hspace{-0.2mm}.\,Oort}: \textit{Hyperelliptic supersingular curves}, Prog.\hspace{0.8mm}Math.\,{\bf 89}, 247--284, 1991.\vspace{-1mm}
% \bibitem{TT14} \textsc{S.\,Tafazolian, F\hspace{-0.3mm}.\,Torres}: \textit{On the curve $y^n = x^m+x$ over finite fields}, J.\hspace{0.7mm}Number \hspace{-0.2mm}Theory\hspace{0.8mm}{\bf 145}, 51--66, 2014.
% \bibitem{Yui} \textsc{N\hspace{-0.1mm}.\hspace{0.3mm}Yui}: \textit{On the Jacobian varieties of hyperelliptic curves over fields of characteristic $p \hspace{-0.2mm}>\hspace{-0.2mm} 2$}, J.\,Algebra\,{\bf 52}, 378--410, 1978.
\end{thebibliography}
\end{document}